\newcommand{\bd}[1]{\mathbf{#1}}  % for bolding symbols
\newcommand{\RR}{\mathbb{R}}      % for Real numbers
\newcommand{\ZZ}{\mathbb{Z}}      % for Integers
\newcommand{\mat}[1]{\left(\begin{matrix} #1 \end{matrix} \right)}  % for matrix
\newcommand{\al}[1]{\begin{align}#1\end{align}}
\newcommand{\aln}[1]{\begin{align*}#1\end{align*}}
\newcommand{\DD}{\mathbb{D}}
\newcommand{\Aa}{\mathcal{A}}
\newcommand{\ee}{\mathbb{E}}
\newtheorem{thm}{Theorem}[section]
\newtheorem{lem}[thm]{Lemma}
\newtheorem{cor}[thm]{Corollary}
\title{The Gap Distribution of Directions in Some Schottky Groups}
\author{Xin Zhang}
\address{1409 W Green Street, Urbana IL 61801, United States}
\email{xz87@illinois.edu}
\begin{document}
\maketitle
\begin{abstract}
We prove the existence and some properties of the limiting gap distribution functions for the directions of some thin group orbits in the Poincar\'e disk.   
\end{abstract}

\section{Introduction}

In this paper,  we investigate the limiting gap distribution of directions for some thin groups.  Specifically, we let $\Gamma_0$ be the Schottky group generated by three reflections $\rho_1,\rho_2,\rho_3$, with non-intersecting isometry half-circles $C_1,C_2,C_3$ in the Poincar\'e disk $\DD$. The region $\mathcal{F}$ bounded by $C_1,C_2,C_3$ is a fundamental domain for $\Gamma_0$.  $\Gamma_0$ is called a \emph{thin group} because $\mathcal{F}$ has infinite volume.  We assume $\mathcal{F}$ is \emph{principal}, in the sense that the Jacobians $|\rho_1^{'}|,|\rho_2^{'}|,|\rho_3^{'}| \leq 1$ on $\mathcal{F}$.  Every such group has a unique principal fundamental domain $\mathcal{F}$, except there are two when one of the isometry circles is a line passing 0. We do not deal with the latter case, although our method can be easily modified to cover this case. Let $z$ be a point on $\partial \DD$, $\mathcal{B}_T$ be the set of elements $\gamma$ in $\Gamma_0$ with $||\gamma||<T$ for some norm $||\cdot||$.  We put an observer at the origin 0, and we want to study the limiting behavior of the gap distribution of directions, or angles of $\{\mathcal{B}_T (z)\}$ when observing at the origin, as $T$ goes to infinity. \\

  \begin{figure}[htbp] %  figure placement: here, top, bottom, or page
    \centering
    \includegraphics[width=2.5in]{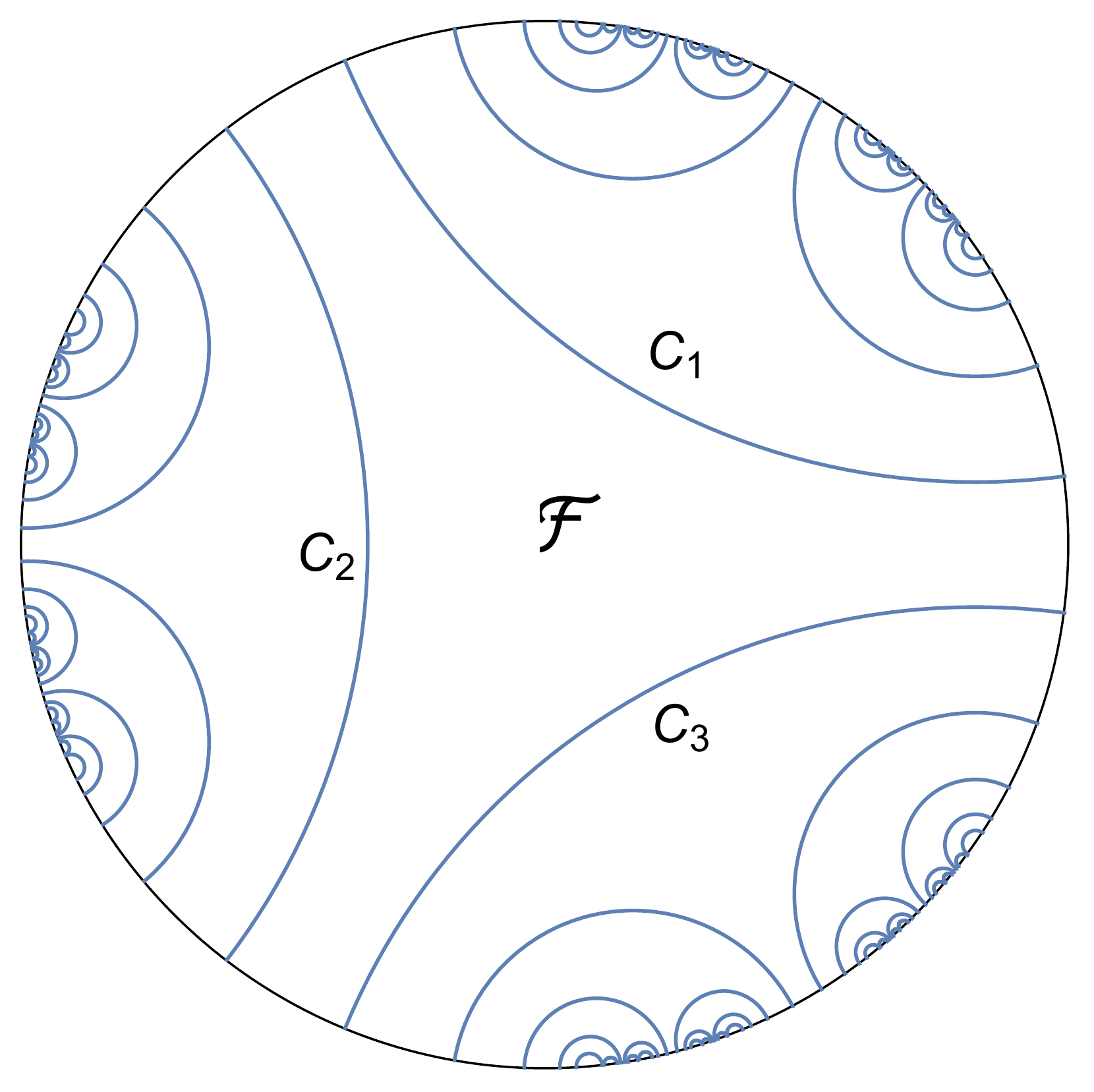} 
   \caption{A hyperbolic reflection group}
    \label{fig1}
 \end{figure}

The group $\Gamma_0$ has an index-2, orientation-preserving and free subgroup $\Gamma$ generated by $\rho_1\rho_2,\rho_1\rho_3$.  As a subgroup of a thin group $\Gamma_0$, $\Gamma$ is also thin.  If $\Gamma^{'}$ is of finite covolume, or a lattice in $SL(2,\RR)$, Kelmer and Kontorovich \cite{KK15} proved the limiting pair correlation of the directions of a single $\Gamma^{'}$-orbit in $\DD$.  This generalizes the work of Boca, Popa, and Zaharescu \cite{BPZ14} which deals with the case when $\Gamma^{'}$ is $SL(2,\ZZ)$ and the observer is placed at an elliptic point.  Later, the work \cite{KK15} was further generalized by Risager and 
S\"odergren  \cite{RS14} to cover the cases $SO(n,1)$ with explicit convergence rate, and by Marklof and Vinogradov \cite{MV14} which determines a large class of local statistics, including gap distribution.  \\

Each of \cite{KK15}, \cite{RS14} and \cite{MV14} used some automorphic tool (either homogeneous dynamics or spectral theory).  Applying automorphic tool to local statistics studies dates back at least to \cite{Ma00}.  See also \cite{EM04}, \cite{MS10}, \cite{RZ15}, \cite{AC10}, \cite{AC14} for some interesting works of this flavor.\\

In fact, all these works and other known works on local statistics related to orbits of discrete hyperbolic isometry groups, have the underlying groups finite co-volume.  Therefore, the novelty of this paper is that it deals with some thin group orbits.  In doing so, we exhibit both similarities and dissimilarities with lattice cases, furthermore, our work serves to give a general flavor for this type of problems in the thin-group case.\\

 \begin{figure}[htbp] %  figure placement: here, top, bottom, or page
    \centering
    \includegraphics[width=2.5in]{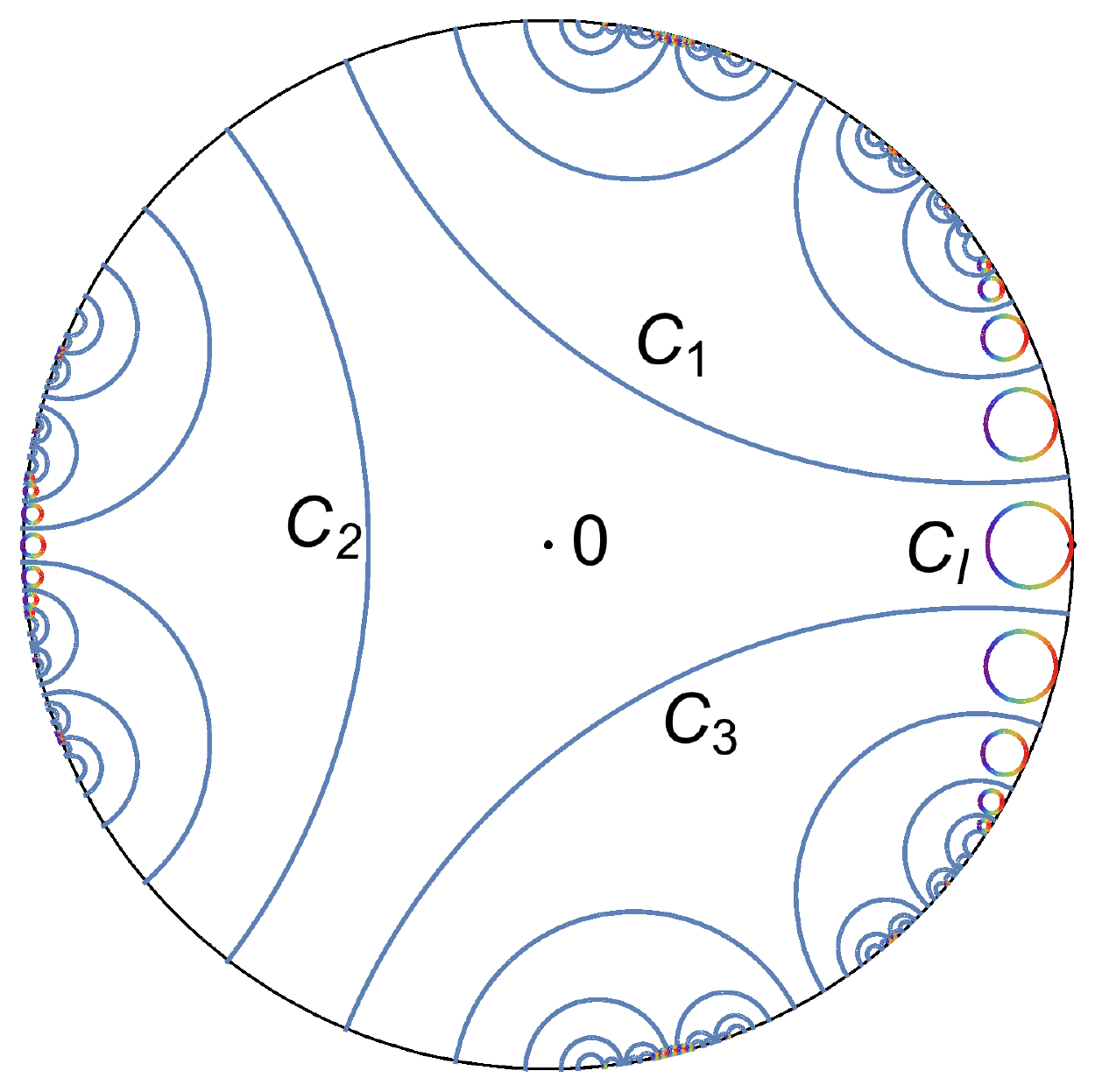} 
   \caption{The family of $C_{\gamma}$'s (hued circles)}
    \label{fig2}
 \end{figure}

Compared to \cite{KK15},\cite{MV14}, we modify our problem in the following way:\\\\
(1) As mentioned above, instead of choosing $z$ inside $\DD$, we choose $z$ to be a point on the boundary, say 1, and 1 lies in the flare region between $C_1$ and $C_3$ (see Figure \ref{fig2}). \\
(2) In \cite{KK15} and \cite{MV14}, they use the Frobenious norm, given by 
$$\Big\vert\Big\vert\mat{a&b\\c&d}\Big\vert\Big\vert_{\rm{Frob}}^2=\frac{|a|^2+|b|^2+|c|^2+|d|^2}{2}.$$
We use a different norm $||\cdot||$. We put a tiny circle $C_{I}=C(1-r_0,r_0)$ tangent at 1 (See Figure \ref{fig2}).  Let $C_{\gamma}=\gamma(C_I)=C((1-r_{\gamma})e^{\theta_{\gamma}\bd{i}}, r_{\gamma})$. We define 
$$||\gamma||^2:=\kappa(C_\gamma)=\frac{1}{r_{\gamma}},$$
where $\kappa(C_\gamma)$ is the curvature, or the reciprocal of the radius of $C_\gamma$.  It turns out that $||\cdot||$ and $||\cdot||_{\rm{Frob}}$ are equivalent, when restricted to $\Gamma$ (see Lemma \ref{equivalentnorms}).  We choose $r_0$ in the following way.  Let $d_E(x,y)$ be the Euclidean distance between $x$ and $y$.  Each $C_i(i=1,2,3)$ divides the circle $\partial\DD$ into a major arc $\mathfrak{M}_i$ and a minor arc $\mathfrak{m}_i$.  let $D_i$ be the region in $\DD$ bounded by $C_i$ and $\mathfrak{m}_i$, we then define the Euclidean distance between the two sets $D_i$ and $D_j$ (we still use the notation $d_E$) by
$$d_E(D_i,D_j)=\inf_{x,y}\{d_E(x,y)\vert x\in D_i,y\in D_j\},$$
We choose $r_0$ so that $C_I\subset {\mathcal{F}}\cup\{1\}$ and $r_0<\frac{1}{3}\min_{i\neq j}\{d_E({D_i,D_j})\}$.\\

The advantage of the above modification is twofold: first the directions of orbit points are just tangencies $\{\gamma (1)\}$, which simplifies our calculation; second the gaps between these tangencies can be explicitly parametrized by $\Gamma$.\\

Unlike a lattice point orbit which converges to everywhere on the boundary,  the orbit $\Gamma z$ converges to a limit set $\Lambda_\Gamma$ in $\partial\DD$, which is a Cantor-like, totally disconnected, perfect set with Hausdorff dimension $<1$.  Let $\delta$ be the critical exponent for $\Gamma$ (roughly speaking, $\delta$ measures the asymptotic growth rate of $\Gamma$).  When $\delta>1/2$, $\delta$ also agrees with the Hausdorff dimension of the limit set $\Lambda_\Gamma$ (See Page 12 and Page 32 of \cite{Mc98} for the computation of $\delta$ for some of such groups).
 Supported on $\Lambda_\Gamma$, there is a canonical $\Gamma$-invariant geometric measure of dimension $\delta$, called the Patterson-Sullivan measure, denoted by $\mu$. $\mu$ is a probability measure and has no atom \cite{Pa76}\cite{Su79}\cite{Su84}. \\

Now we introduce our problem.  Let $\mathcal{I}$ be an interval of $\partial \DD$ with $\mu(\mathcal{I})>0$.  Let $\mathcal{A}_T=\mathcal{A}_{T,\mathcal{I}}=\{x_i\}$ be the counter-clockwise oriented sequence of the points $$\left\{\gamma(1)\in\mathcal{\mathcal{I}} \big\vert\gamma\in\Gamma_0, ||\gamma||<T\right\}.$$  The gap between $x_i$ and $x_{i+1}$, denoted by $d(x_i,x_{i+1})$, is just the arclength distance between $x_i$ and $x_{i+1}$.  The size of $\Aa_T$ is asymptotically $c_0\mu(\mathcal{I}) T^{2\delta}$ for some positive constant $c_0$ (Theorem \ref{countpoints}).  Therefore,  when defining the gap distribution function, a first thought is to normalize the gaps in $\Aa_T$ by dividing them by $T^{2\delta}$, as in the previous works.   However, as the following Theorem \ref{mainthm} indicates, it turns out most gaps are not of the order of the average gap ${1}/{T^{2\delta}}$, but of the order ${1}/{T^2}$.  And unlike a typical lattice orbits (for example, Farey sequences), any gap in $\mathcal{A}_T$ has two end points $\gamma_1(1),\gamma_2(1)$ with $||\gamma_1||,||\gamma_2||$ comparable (see Theorem \ref{comparable}). \\

We define our gap distribution function to be 
\al{\label{11082323} F_{T,\mathcal{I}}(s)=\frac{1}{c_0\mu(\mathcal{I})T^{2\delta}}\sum_{x_i\in\Aa_T}\bd{1}\{\frac{d(x_i,x_{i+1})}{T^2}\leq s\}.}

Our main theorem is the following:
\begin{thm} \label{mainthm}There exists a monotone, continuous function $F$ on $(0,\infty)$, independent of $\mathcal{I}$, such that 
$$\lim _{T\rightarrow \infty} F_{T,\mathcal{I}}(s)=F(s).$$
Moreover, $F$ is supported away from 0,
and $$\lim_{s\rightarrow \infty}F(s)=1.$$
\end{thm}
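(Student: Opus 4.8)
The plan is to convert the gap count in \eqref{11082323} into an orbit-counting problem for $\Gamma$ and then feed it into the Patterson--Sullivan asymptotics of Theorem~\ref{countpoints}. First I would use the explicit parametrization of the gaps together with Theorem~\ref{comparable} to show that every gap of $\Aa_T$ arises by applying a group element $\gamma\in\Gamma$ to one of finitely many \emph{model gaps} $g_1,\dots,g_N$ sitting next to the reference circle $C_I$. The point of placing the model gaps adjacent to $C_I$ is that the \emph{same} derivative $|\gamma'|$ near $1$ then governs both the curvature $\|\gamma\|^2=\kappa(C_\gamma)\approx 1/(r_0|\gamma'(1)|)$ and the arclength of $\gamma(g_j)$, so that bounded distortion of Schottky maps gives
\[
T^2\, d(x_i,x_{i+1}) \;\sim\; \Phi_j\!\left(\frac{\|\gamma\|}{T}\right), \qquad \Phi_j(t)=\frac{c_j}{t^2},
\]
with $c_j>0$ depending only on the type $j$ and \emph{not} on the location $\gamma(1)$. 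The requirement that the two endpoints be consecutive in $\Aa_T$ --- both of norm $<T$, every interior orbit point of norm $\ge T$ --- translates, via the comparability of the two endpoint norms (Theorem~\ref{comparable}), into a clean window $\|\gamma\|\in[a_jT,\,b_jT)$.

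With this reduction the sum in \eqref{11082323} becomes a finite sum over types of orbit counts in norm-annuli with the location restricted to $\mi$. Since $\Phi_j$ is decreasing, the gap condition is equivalent to $\|\gamma\|\ge T\sqrt{c_j/s}$, so the sum equals
\[
\sum_{j=1}^{N}\#\left\{\gamma\in\Gamma:\ \gamma(1)\in\mi,\ \|\gamma\|\in\bigl[\max\!\bigl(a_j,\sqrt{c_j/s}\bigr)T,\ b_jT\bigr)\right\}+o(T^{2\delta}).
\]
Applying the sector--annulus refinement of Theorem~\ref{countpoints}, namely $\#\{\gamma:\gamma(1)\in\mi,\ \|\gamma\|<X\}\sim c_0\,\mu(\mi)\,X^{2\delta}$, each count is asymptotic to $c_0\mu(\mi)T^{2\delta}\bigl(b_j^{2\delta}-\max(a_j,\sqrt{c_j/s})^{2\delta}\bigr)_{+}$. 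Dividing by $c_0\mu(\mi)T^{2\delta}$ the factor $\mu(\mi)$ cancels, leaving
\[
F(s)=\sum_{j=1}^{N}\Bigl(b_j^{2\delta}-\max\!\bigl(a_j,\sqrt{c_j/s}\bigr)^{2\delta}\Bigr)_{+},
\]
which is manifestly independent of $\mi$.

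All four asserted properties are then read off from this formula. Monotonicity is immediate, since each summand increases as $\sqrt{c_j/s}$ decreases. Continuity holds because $F$ is a finite sum of continuous functions of $s$; the only thing to exclude is a hidden atom in the limiting count, which is ruled out by the non-atomicity of $\mu$ together with the continuous dependence of the annulus on $s$. For small $s$ one has $\sqrt{c_j/s}>b_j$ for every $j$, so every term vanishes and $F(s)=0$; thus $F$ is supported away from $0$, reflecting the minimal normalized gap $\min_j c_j/b_j^2>0$. Finally, letting $s\to\infty$ sends each $\max(a_j,\sqrt{c_j/s})$ to $a_j$, giving $F(\infty)=\sum_j(b_j^{2\delta}-a_j^{2\delta})$; comparing with the total count, $\sum_j \#\{\gamma\in[a_jT,b_jT)\}\sim|\Aa_T|\sim c_0\mu(\mi)T^{2\delta}$ forces $\sum_j(b_j^{2\delta}-a_j^{2\delta})=1$, so $F(\infty)=1$.

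The principal obstacle is the first step: producing the finite list of model gaps with location-independent constants $c_j$ and clean norm-windows $[a_j,b_j)$. This requires (i) controlling the conformal distortion of long words $\gamma$ so that the location dependence of the gap length genuinely washes out in the limit --- for which the placement of the model gaps adjacent to $C_I$ and the bounded-distortion property of the Schottky generators are essential --- and (ii) turning the combinatorial ``consecutive at scale $T$'' condition into the norm window, which is exactly where the comparability of the endpoint norms (Theorem~\ref{comparable}) is used. A secondary technical point is that passing to the limit termwise demands that the count in Theorem~\ref{countpoints} be sufficiently uniform in thin annuli and under the restriction $\gamma(1)\in\mi$: an effective equidistribution version of the count, not merely its leading asymptotic, is needed to control the $o(T^{2\delta})$ error across the finitely many types.
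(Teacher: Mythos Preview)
Your overall architecture is right --- parametrize gaps, reduce to an orbit count, divide by $c_0\mu(\mi)T^{2\delta}$ --- but two of the structural claims you rely on are not true, and they are exactly where the actual work lies.

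First, there are not finitely many model gaps. In the ``sparse'' situation (your case $\gamma_1=\gamma(\rho_j\rho_k)^n w$ with $\{j,k\}\neq\{1,3\}$) the neighbour on one side can be $\gamma(\rho_k\rho_j)^n w'(1)$; the longest common prefix of the two endpoints is only $\gamma$, so the ``model gap'' you must pull back by $\gamma$ is the pair $\bigl((\rho_j\rho_k)^n w(1),\,(\rho_k\rho_j)^n w'(1)\bigr)$, which genuinely depends on $n$. The paper's decomposition is therefore an \emph{infinite} sum over $n$, controlled by the exponential contraction of Lemma~\ref{11081149} and a truncation $n\le M$, $M\to\infty$. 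Your finite list $g_1,\dots,g_N$ cannot absorb this.

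Second, and more damaging to your explicit formula, the constants $c_j,a_j,b_j$ are \emph{not} location-independent. Your distortion heuristic ``same $|\gamma'|$ governs curvature and arclength'' is only valid when the two tangencies $\theta_v,\theta_{v_m^\square}$ of the model gap are close. In the sparse case they sit in different minor arcs $\mathfrak{m}_j,\mathfrak{m}_k$, so the ratio $|\gamma'(\theta_v)|/|\gamma'(\theta_{v_m^\square})|$ --- and hence both $\|\gamma v_m^\square\|/\|\gamma v\|$ and $T^2\,d(\gamma v(1),\gamma v_m^\square(1))$ --- depends on $\phi_2(\gamma)$, not just on $\|\gamma v\|/T$. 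Concretely, the paper's asymptotics \eqref{curvatureformula} and \eqref{11081825} show that the gap and norm conditions cut out a region $\Omega_{i,j,k}^n(w,\square,l,s;T)$ in the $(t,\phi_2)$-plane, and the limit $F(s)$ is the $dS=e^{\delta t}\,dt\times d\mu$-measure of the translated region, summed over all labels. This requires the bisector counting of Corollary~\ref{11081155}, not merely the one-variable count in Theorem~\ref{countpoints}; the $\mu$-integration over $\phi_2$ survives in $F$ and no closed algebraic expression of the shape $\sum_j\bigl(b_j^{2\delta}-\max(a_j,\sqrt{c_j/s})^{2\delta}\bigr)_+$ is available.

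In short: replace ``finitely many model gaps with constants $a_j,b_j,c_j$'' by ``infinitely many regions $\Omega_{i,j,k}^n(\cdot)$ in $(t,\phi_2)$'', replace Theorem~\ref{countpoints} by Corollary~\ref{11081155}, and insert the $M$-truncation to justify interchanging the limit with the sum over $n$. The qualitative conclusions you draw (monotonicity, continuity, support away from $0$, $F(\infty)=1$) then follow from the corresponding properties of $s\mapsto S(\Omega_{i,j,k}^n(w,\square,l,s;1))$, but the proof that $F$ is supported away from $0$ needs the uniform-in-$n$ bound the paper establishes at \eqref{11091346}, precisely because there are infinitely many terms.
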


Figure \ref{dist4} and Figure \ref{densitydistribution} are both empirical plots for the case when $\mathfrak{m}_1$, $\mathfrak{m}_2$,$\mathfrak{m}_3$ are evenly spaced and each of them has arclength $\frac{7\pi}{12}$.  From \cite{Mc98}, the critical exponent in this case is $\delta=0.62627635$.  Figure \ref{dist4} is the plot for the gap distribution function $F_{T,\mathcal{I}}$, when $T=\sqrt{2}\times10^{3}$ and $\mathcal{I}$ is the full boundary $\partial{\DD}$.  Figure \ref{densitydistribution} illustrates the densities of the gap distributions, or ``$F_{T,\mathcal{I}}^{'}$'' for various $T$'s and $\mathcal{I}$'s (We didn't prove $F_{T,\mathcal{I}}^{'}$ exist everywhere exluding countably many points, although we expect this to be true.  Figure \ref{densitydistribution} are actually some normalized histograms with step taken to be 0.2). \\

The scale $1/T^2$ agree with what a random point process on $\Lambda_\Gamma$ predicts (See Appendix \ref{AppendixA}).  It is not completely unsurprising,  as $\mathcal{B}_T(1)$ tend to accumulate at those tiny clusters which make up the limit set $\Lambda_\Gamma$, instead of the whole interval.  An informal way to think of the scale $1/T^2$ is that $\mathcal{B}_T(1)$ are evenly spaced in a small neighborhood of $\Lambda_\Gamma$,  and since gaps are of scale $1/T^2$, the union of intervals containing $\mathcal{B}_T(1)$ form an efficient open cover of $\Lambda_\Gamma$; the number of such intervals is $T^{2\delta}$, which agree with the Hausdorff dimension of $\Lambda_\Gamma$.

 \begin{figure}[htbp] %  figure placement: here, top, bottom, or page
    \centering
    \includegraphics[width=3in]{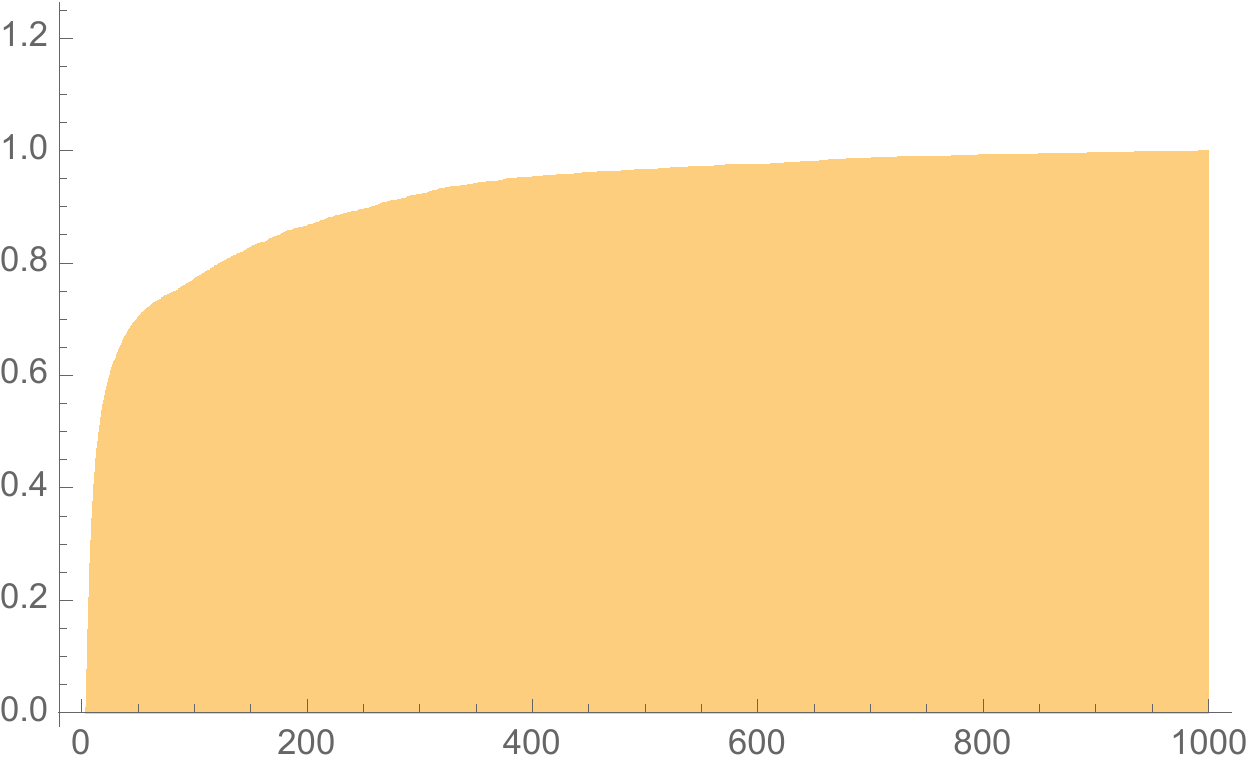} 
   \caption{The plot for the gap distribution function $F_{\sqrt{2}\times10^3,\partial{\DD}}$ }
     \label{dist4}
 \end{figure}
 
 \begin{figure}[h]
\centering
\begin{minipage}{.45\textwidth}
  \centering
  \includegraphics[width=2.4in]{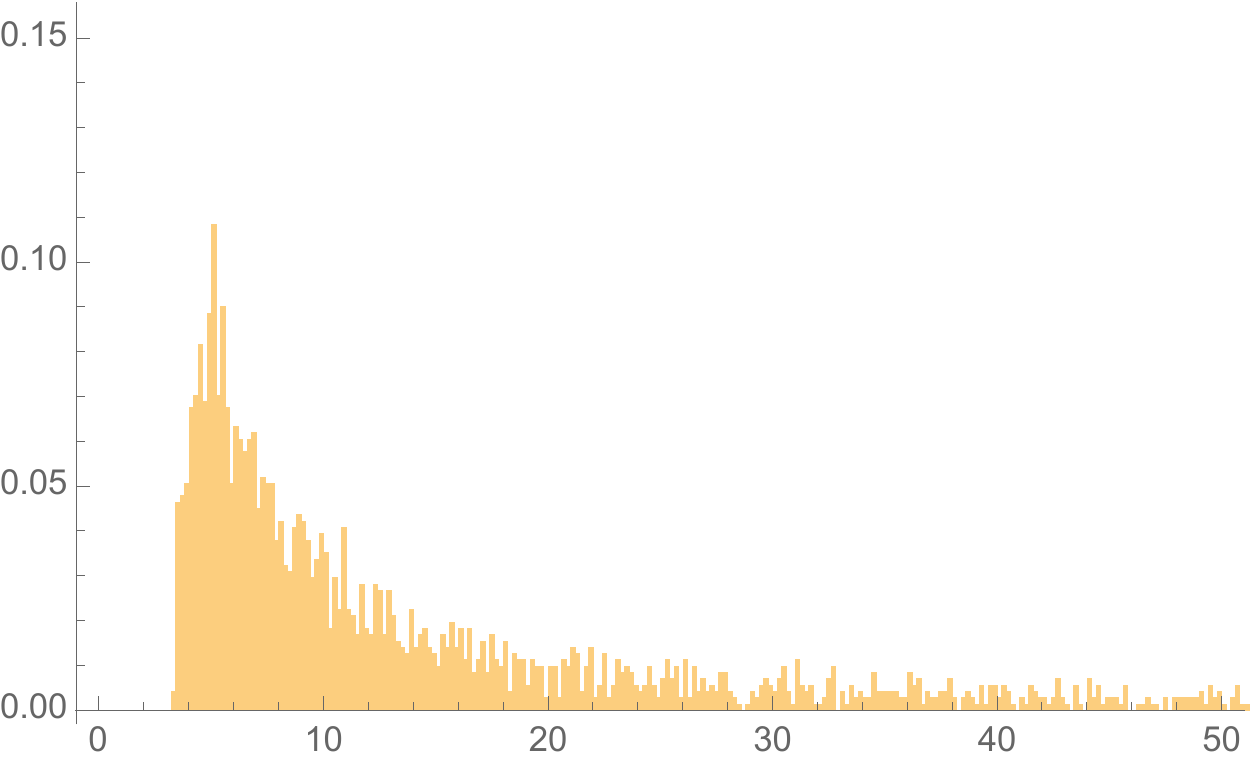}
  \subcaption{$T=\sqrt{2}\times10^3$, $\mathcal{I}=\partial{\DD}$}
  \end{minipage}%\\
\begin{minipage}{.45\textwidth}
  \centering
  \includegraphics[width=2.4in]{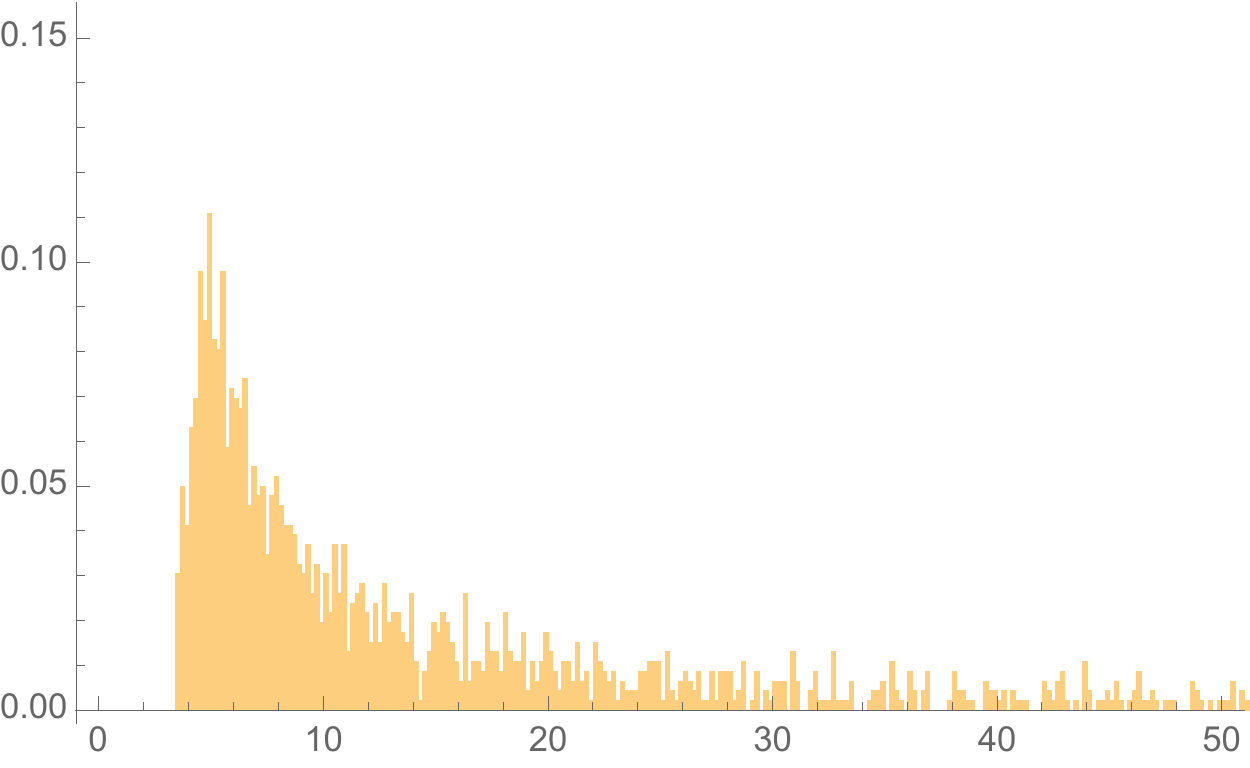}
   \subcaption{$T=10^3$, $\mathcal{I}=\partial{\DD}$}
  \end{minipage}\\
\begin{minipage}{.45\textwidth}
  \centering
  \includegraphics[width=2.4in]{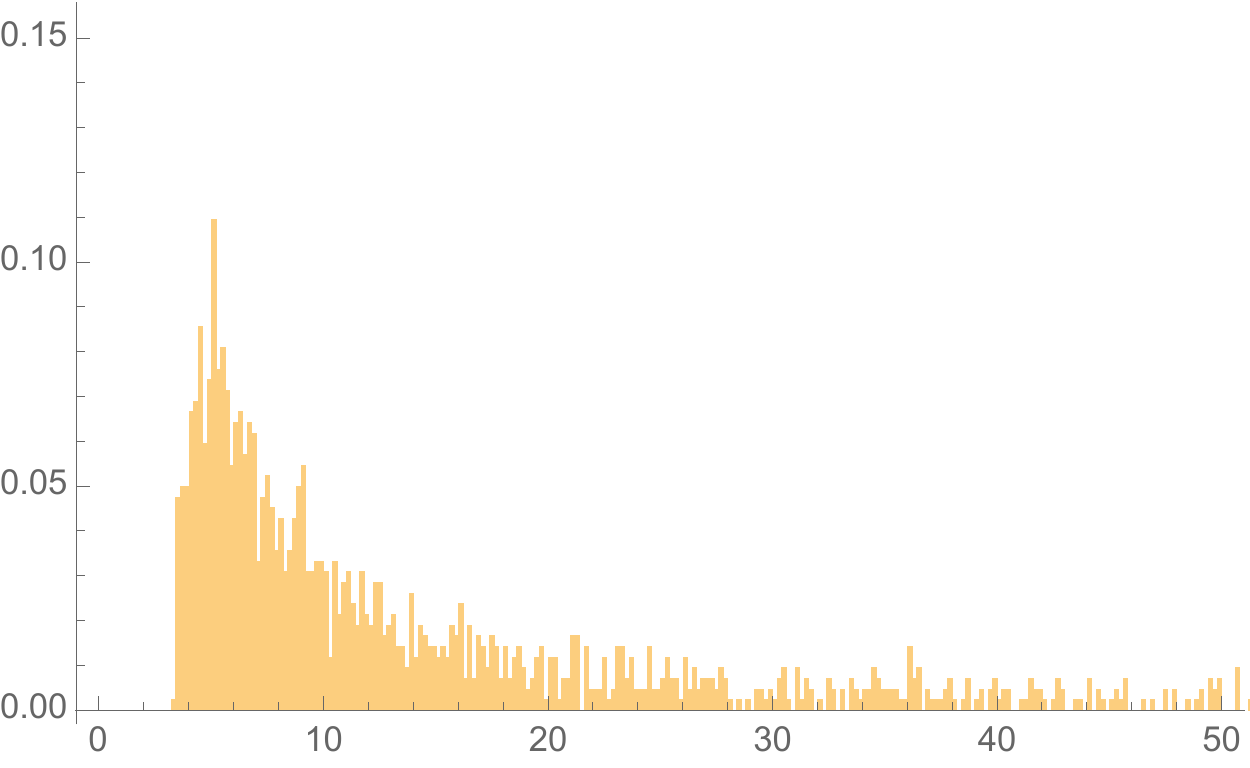}
   \subcaption{$T=\sqrt{2}\times10^3$, $\mathcal{I}=[0.695204,2.980334)$}
 \end{minipage}
\caption{The plot for $F^{'}_{T,\mathcal{I}}$ for various $T$'s and $\mathcal{I}$'s}
 \label{densitydistribution}
\end{figure}

The crucial tool that we use is a bisector counting theorem for $\Gamma$ (Theorem \ref{OS}).  The case when $\Gamma$ is a lattice, or equivalently $\delta=1$ was proved by Good \cite{Go83}.   Later it was generalized by Bourgain, Kontorovich and Sarnak to cover the cases when $\delta>\frac{1}{2}$ using the unitary representation theory of $SL(2,\RR)$ \cite{BKS10}.  Oh and Shah further extended this counting result  to cover the cases when $0<\delta\leq \frac{1}{2}$ using mixing of the geodesic flow for the Bowen-Margulis-Sullivan measure \cite{OS13}.\\

$\bd{Notation}$: For convenience we adopt the following notation throughout the paper:  Our main growing parameter is $T$. We write $f\sim g$ to mean $\frac{f}{g } \rightarrow 1$.  By $f\ll g $ $(f\gg g, f\asymp g)$ we mean there exists a positive constant $B$ depending only on $\Gamma$ such that $f\leq Bg $ $(f\geq Bg, \frac{1}{B}g\leq f \leq Bg)$.  We also use $f=O(g)$ which is synonymous with $f\ll g$, and $f=o(g)$ to mean $\frac{f}{g}\rightarrow 0$.  Without further explanation all the implied constants depend at most on $\Gamma$ and the norm $||\cdot||$.\\

We say a conditon A (about $T$) is \emph{asymptotically equivalent} to a condition B (also about $T$) if either of the two situations occur: (1) both 
$\#\{\gamma\in\Gamma \vert \gamma \text { satisfies A}\}$ and $ \#\{\gamma\in\Gamma \vert \gamma \text{ satisfies B}\}  $ are $o(T^{2\delta})$; (2) $$\lim_{T\rightarrow \infty}\frac{\#\{\gamma\in\Gamma \vert \gamma \text{ satisfies A}\}}{ \#\{\gamma\in\Gamma \vert \gamma \text{ satisfies B}\}  } =1.$$
 
$\bd{Plan\text{ } for\text{ } the\text{ } paper}$: In Sec. \ref{seccounting} we prove several counting theorems about $\Gamma$.  In Sec. \ref{seccompare} we prove several statements on the relationship between the norm $||\cdot||$ and the word length of an element when expressed as a word of $S_i$'s.  An auxiliary tool is to campare the norm $||\cdot||$ to the Frobenius norm $||\cdot||_{\text{Frob}}$ . In Sec. \ref{seccompare} we divide $\Gamma$ into infinite families according to the ending letters of $\gamma$, and the results in Sec. \ref{seccompare} guarantees that we only need to check bounded many cases when determining the neighbors of any element in $\mathcal{A}_{T,\mathcal{I}}$.  In Sec. \ref{secgap} we prove our main theorem, where the results in Sec. \ref{seccompare} allows us to pass the limit when assembling the contributions from these infinite families.  We conclude this paper by analyzing the limiting behavior of a random process on $\Lambda_\Gamma$ in the appendix.  \\
 
$\bd{Acknowledgement}$: The author would like to thank Elena Fuchs, Hee Oh, Jens Marklof and Ze\'ev Rudnick for useful discussions.

\section{Several Counting Statements \label{seccounting}}
Let $\DD$ be the Poincar\'e disc with the metric $$ds^2=\frac{4(dx^2+dy^2)}{(1-(x^2+y^2))^2}.$$  The orientation-preserving symmetry group of $\DD$ is 
$$G=PSU(1,1)=\left\{\mat{\xi&\eta\\\bar{\eta}&\bar{\xi}}\Big\vert |\xi|^2-|\eta|^2=1\right\} \cong PSL_2(\RR).$$

Let 
\aln{& K=\left\{\mat{e^{\frac{\phi\bd{i}}{2}}&\\&e^{\frac{-\phi\bd{i}}{2}}}\Big\vert \phi\in[0,2\pi)\right\},\\
& A=\left\{\mat{\cosh \frac{t}{2}& \sinh \frac{t}{2}\\\sinh\frac{t}{2}&\cosh\frac{t}{2} }\Big\vert t\in[0,\infty) \right\}.
}
Recall the Cartan decompositon $G=KA^{+}K$ that each $g\in G$ can be written in a unique way as 
$$g=k_{\phi_1(g)}a_{t(g)}k_{\pi-\phi_2(g)}=\mat{e^{\frac{\phi_1(g)}{2}\bd{i}}&\\&e^{\frac{-\phi_1(g)}{2}\bd{i}}}\mat{\cosh \frac{t(g)}{2}& \sinh \frac{t(g)}{2}\\\sinh\frac{t(g)}{2}&\cosh\frac{t(g)}{2}}\mat{e^{\frac{\pi-\phi_2(g)}{2}\bd{i}}&\\&e^{-\frac{\pi-\phi_2(g)}{2}\bd{i}}}$$
with $\phi_1(g),\phi_2(g)\in[0,2\pi)$ and $t(g)>0$, so this determines $\phi_1,\phi_2,t$ as functions of $g$.  The Haar measure is given by $dg=e^td\phi_1d\phi_2 dt$.\\

In this coordinate, $\gamma$ maps a circle $C((1-r)e^{\theta\bd{i}},r)$ to a circle of curvature 
\al{\label{curvatureformula}\nonumber\kappa(\gamma(C((1-r)e^{\theta\bd{i}},r)))&=e^{t(\gamma)}\frac{1-r}{2r}(1+\cos (\pi-\phi_2(\gamma)+\theta))\\&+e^{-t(\theta)}\frac{1-r}{2r}(1-\cos(\pi- \phi_2(\gamma)+\theta))+1}
and tangent to $\partial \DD$ at
 \al{\label{tranformula} exp\left({\left(\phi_1(\gamma)+\arcsin \frac{2\sin(\pi-\phi_2(\gamma)+\theta)}{e^{t(\gamma)}(1+\cos(\pi-\phi_2(\gamma)+\theta))+e^{-t(
\gamma)}(1-\cos(\pi-\phi_2(\gamma)+\theta))}\right)\bd{i} } \right).}\\

Our crucial tool is the following bisector counting theorem:  

\begin{thm}[Good, Bourgain-Kontorvich-Sarnak, Oh-Shah] \label{OS}Let $\Gamma$ be a non-elementary, geometrically finite discrete subgroup in $PSU(1,1)$.  Let $\mathcal{I},\mathcal{J}$ be two intervals in $[0,2\pi)$ with $\mu(\mathcal{I}),\mu(\mathcal{I})>0$, then 
$$\sum_{\gamma\in\Gamma}\bd{1}
  \left\{
  \begin{array}{@{}ll@{}}
    \phi_1(\gamma)\in\mathcal{I}\\
    \phi_2(\gamma)\in\mathcal{J}\\
    t(\gamma)\leq T
  \end{array}\right.\sim c_{\Gamma}\mu(\mathcal{I})\mu(\mathcal{J})e^{\delta T}
$$
for some positive constant $c_{\Gamma}$ which only depends on $\Gamma$.
\end{thm}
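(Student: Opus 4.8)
The plan is to derive Theorem \ref{OS} from the mixing of the geodesic flow on $X=\Gamma\backslash G$ with respect to the Bowen--Margulis--Sullivan (BMS) measure $m^{\mathrm{BMS}}$, following the thickening technique of Eskin--McMullen adapted to the geometrically finite setting by Oh--Shah \cite{OS13}. The first step is to recall the geometric meaning of the Cartan data. Since $K$ fixes the origin $o\in\DD$ and $a_t$ is translation by $t$ along the geodesic through $o$ in the direction of angle $0$, the decomposition $\gamma=k_{\phi_1(\gamma)}a_{t(\gamma)}k_{\pi-\phi_2(\gamma)}$ places $\gamma(o)$ at hyperbolic distance $t(\gamma)$ from $o$ in the direction $\phi_1(\gamma)$, while $\gamma^{-1}(o)$ sits at the same distance in the direction $\phi_2(\gamma)$ (up to the harmless shift by $\pi$). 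Thus the three conditions $\phi_1(\gamma)\in\mathcal{I}$, $\phi_2(\gamma)\in\mathcal{J}$, $t(\gamma)\le T$ cut out a \emph{bisector region}: an orbit point inside a hyperbolic ball of radius $T$, seen from $o$ in a prescribed forward cone $\mathcal{I}$ and whose inverse is seen in a prescribed backward cone $\mathcal{J}$.

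Next I would replace the sharp cutoffs $\bd{1}_{\mathcal{I}},\bd{1}_{\mathcal{J}}$ by smooth bumps $\psi_1,\psi_2$ on $K\cong\partial\DD$ and the cutoff $\bd{1}_{[0,T]}(t)$ by a smoothed radial weight, producing a smoothed count $N_T(\psi_1,\psi_2)$ that sandwiches the true count from above and below. The key algebraic step is to unfold the $\Gamma$-sum: thickening each constraint transversally in the $K$-directions turns $N_T(\psi_1,\psi_2)$ into a correlation integral over $X$ of the shape
$$\int_0^T\Big(\int_X \Phi_1\cdot(\Phi_2\circ a_t)\, dm^{\mathrm{BMS}}\Big)\,w(t)\,dt,$$
where $\Phi_1,\Phi_2$ are functions on $X$ built by summing the transverse thickenings of $\psi_1,\psi_2$ over $\Gamma$, and $w(t)$ records the radial weight. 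This is exactly where the hypotheses that $\Gamma$ is non-elementary and geometrically finite enter, since they guarantee that $m^{\mathrm{BMS}}$ is finite and the flow is mixing.

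Applying mixing, as $t\to\infty$ one has
$$\int_X \Phi_1\cdot(\Phi_2\circ a_t)\, dm^{\mathrm{BMS}}\ \longrightarrow\ \frac{m^{\mathrm{BMS}}(\Phi_1)\,m^{\mathrm{BMS}}(\Phi_2)}{|m^{\mathrm{BMS}}|}.$$
Because the BMS measure factorizes in Hopf coordinates as $d\mu(\xi^+)\,d\mu(\xi^-)$ times Lebesgue in the flow variable (weighted by the conformal density of dimension $\delta$), projecting $\Phi_i$ back onto the $K$-data produces precisely $\mu(\mathcal{I})$ and $\mu(\mathcal{J})$ in the limit, while the accumulated conformal factor over the shell $\{t\le T\}$ contributes the growth $e^{\delta t}\,dt$, whose integral yields the $e^{\delta T}$ in the statement. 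Collapsing all remaining constants into a single $c_\Gamma$ and then shrinking the smoothing parameters — using that $\mu$ is non-atomic, so the $\mu$-boundaries of $\mathcal{I}$ and $\mathcal{J}$ vanish — upgrades the smoothed asymptotic to the asymptotic for the sharp count.

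The hard part is not this formal mixing-to-counting conversion but making it rigorous in the presence of infinite volume. Unlike the lattice case treated by Good \cite{Go83} via spectral theory, or the $\delta>1/2$ case of Bourgain--Kontorovich--Sarnak \cite{BKS10}, where the Patterson--Sullivan density realizes a complementary-series representation and the spectral gap does the work, here $m^{\mathrm{BMS}}$ is supported on the fractal set of geodesics with both endpoints in $\Lambda_\Gamma$, and the funnel/cusp ends destroy the uniform wavefront estimates of Eskin--McMullen. One must therefore control the escape of mass in the thickened test functions and justify interchanging the $\Gamma$-sum with the mixing limit uniformly in $t$; this is the technical core of \cite{OS13}, and it is what delivers the clean leading asymptotic rather than merely matching upper and lower bounds.
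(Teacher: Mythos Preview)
The paper does not prove Theorem~\ref{OS} at all: it is quoted as a black box, with attribution to Good~\cite{Go83} for the lattice case, Bourgain--Kontorovich--Sarnak~\cite{BKS10} for $\delta>1/2$ via unitary representation theory, and Oh--Shah~\cite{OS13} for the full range $0<\delta\le 1$ via mixing of the BMS measure. So there is no ``paper's own proof'' to compare against; your proposal is not competing with anything in the text.

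That said, what you have written is a faithful high-level outline of the Oh--Shah argument and is broadly correct as a sketch. A couple of points worth tightening if you ever need to make this rigorous. First, the unfolding does not literally produce a single integral $\int_0^T\langle \Phi_1,\Phi_2\circ a_t\rangle_{m^{\mathrm{BMS}}}\,w(t)\,dt$; rather one thickens in the stable/unstable directions and compares the orbital sum to a translate of a Burger--Roblin--type integral, with the BMS mixing entering through an equidistribution statement for $K$-orbits (or horospherical pieces) pushed by $a_t$. The correlation-integral formulation you wrote is morally right but hides the asymmetry between the two $K$-factors. Second, the step ``projecting $\Phi_i$ back onto the $K$-data produces precisely $\mu(\mathcal I)$ and $\mu(\mathcal J)$'' uses that the Patterson--Sullivan density based at $o$ is exactly the boundary measure appearing in the Hopf factorization when one views $K\cong\partial\DD$ via $k\mapsto k\cdot 1$; this identification, together with the non-atomicity of $\mu$, is what lets you pass from smooth bumps back to indicators of intervals. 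Both points are handled carefully in \cite{OS13}, and your final paragraph correctly flags that the genuine difficulty is controlling the thickening uniformly in the infinite-volume ends.
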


Theorem \ref{OS} has the following corollary which is applied to our analysis:
\begin{cor}\label{11081155}Let $\mathcal{I}$ be as above. Define a measure $dS=e^{\delta t}dt\times d\mu$ on $[0,\infty)\times[0,2\pi)$. Let $\Omega=\Omega_0$ be a bounded set with piecewise smooth boundary in $[0,\infty)\times[0,2\pi)$ and $S(\Omega_0)>0$, and let $\Omega_T=\{(t,\theta)\in[0,\infty)\times[0,2\pi)\big\vert ({t}-{T},x)\in\Omega\}$.  Then as $T$ goes to $\infty$,
\aln{\sum_{\gamma\in\Gamma} \left\{
  \begin{array}{@{}ll@{}}
    \phi_1(\gamma)\in\mathcal{I}\\
    (t(\gamma),\phi_2(\gamma))\in\Omega_T
  \end{array}\right.\sim c_\Gamma\mu(\mathcal{I})S(\Omega_0)e^{\delta T}.}
   \end{cor}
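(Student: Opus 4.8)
The plan is to treat this as a standard upgrade of the box-counting asymptotic in Theorem~\ref{OS} to counting against the measure $S$ over a general region, by inner/outer approximation with grids of product boxes. First I would record the corollary for a single product box $\Omega_0=[a,b]\times\mathcal{J}$, where $\mathcal{J}\subset[0,2\pi)$ is an interval with $\mu(\mathcal{J})>0$. Here $\Omega_T=[a+T,\,b+T]\times\mathcal{J}$, so the count in question equals $\#\{\gamma:\phi_1(\gamma)\in\mathcal{I},\ t(\gamma)\leq b+T,\ \phi_2(\gamma)\in\mathcal{J}\}$ minus the same quantity with $b$ replaced by $a$. Applying Theorem~\ref{OS} to each term and subtracting gives an asymptotic proportional to $e^{\delta T}(e^{\delta b}-e^{\delta a})\mu(\mathcal{I})\mu(\mathcal{J})$, which matches $c_\Gamma\mu(\mathcal{I})\,S(\Omega_0)\,e^{\delta T}$ up to the normalizing constant implicit in the definition of $dS$; this both verifies the box case and pins down the constant.

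Next, for a general bounded $\Omega_0$ with piecewise smooth boundary I would lay down a grid of product rectangles $[t_i,t_{i+1}]\times[\theta_j,\theta_{j+1}]$ of mesh $\varepsilon$ and let $\Omega^{-}$ (resp.\ $\Omega^{+}$) be the union of the grid rectangles contained in (resp.\ meeting) $\Omega_0$, so that $\Omega^{-}\subset\Omega_0\subset\Omega^{+}$. The crucial measure-theoretic input is $S(\partial\Omega_0)=0$. A piecewise smooth boundary is a finite union of smooth arcs, and each such arc can be covered by finitely many neighborhoods on which it is the graph $\theta=f(t)$ or the graph $t=g(\theta)$; graphs of the first type are $S$-null precisely because $\mu$ is atomless (so $\int e^{\delta t}\mu(\{f(t)\})\,dt=0$ by Fubini), while graphs of the second type are $S$-null because the density $e^{\delta t}\,dt$ is atomless. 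Combining these over the finitely many arcs gives $S(\partial\Omega_0)=0$, whence $S(\Omega^{+}\setminus\Omega^{-})\to 0$ as $\varepsilon\to 0$ and therefore $S(\Omega^{\pm})\to S(\Omega_0)$.

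Finally I would assemble the estimate. For a fixed grid, $\Omega^{-}$ and $\Omega^{+}$ are finite disjoint unions of boxes, so applying the box case to each and summing the finitely many asymptotics yields
\[
c_\Gamma\mu(\mathcal{I})S(\Omega^{-})e^{\delta T}(1+o(1))\ \le\ N(T)\ \le\ c_\Gamma\mu(\mathcal{I})S(\Omega^{+})e^{\delta T}(1+o(1)),
\]
where $N(T)$ denotes the desired count; any grid column of $\mu$-measure zero contributes only $o(e^{\delta T})$, as one sees by enlarging it to positive measure $\eta$, applying the box case, and letting $\eta\to 0$, consistent with its vanishing $S$-mass. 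Dividing by $e^{\delta T}$ and letting $T\to\infty$ \emph{first} squeezes $\liminf$ and $\limsup$ of $N(T)/e^{\delta T}$ between $c_\Gamma\mu(\mathcal{I})S(\Omega^{-})$ and $c_\Gamma\mu(\mathcal{I})S(\Omega^{+})$; then refining the grid ($\varepsilon\to 0$) forces both to $c_\Gamma\mu(\mathcal{I})S(\Omega_0)$. Taking the limits in this order — $T$ first, mesh second — is exactly what lets me invoke only the pointwise asymptotic of Theorem~\ref{OS} on each box and avoid any uniform error control. The step I expect to require the most care is the boundary-nullity claim $S(\partial\Omega_0)=0$: since $\mu$ is a fractal Patterson--Sullivan measure rather than Lebesgue, the argument must genuinely separate the two arc types and lean on the atomlessness of $\mu$ together with piecewise smoothness, and it is here that the hypotheses on $\Omega_0$ are used in an essential way.
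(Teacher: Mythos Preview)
Your proposal is correct and is essentially the argument the paper has in mind: the paper's proof consists of a single sentence referring to Proposition~5.2 of \cite{RZ15} and noting that the only new ingredient needed here is the atomlessness of $\mu$, which lets one approximate $\Omega_T$ by sectors with controlled error. Your inner/outer box approximation, the sandwich between $\Omega^{\pm}$, and especially your explicit isolation of atomlessness as the reason $S(\partial\Omega_0)=0$ are exactly a written-out version of that cited argument; the handling of $\mu$-null grid columns and the order of limits ($T$ first, mesh second) are the standard care points, and you have them right.
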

   \begin{proof}
   This follows line by line from the argument in Proposition 5.2 of \cite{RZ15}, with the extra ingredient that $\mu$ is atomless, so we can approximate $\Omega_T$ by sectors with the errors under control.  
   \end{proof}
   
Each $\gamma\in\Gamma_0$ can be expressed in a unique way as $S_1S_2\cdots S_{l(\gamma)}$, where each $S_i\in \{\rho_1,\rho_2,\rho_3\}$ and $S_{i}\neq  S_{i+1}$.  We call $S_1S_2\cdots S_{l(\gamma)}$ is the reduced word for $\gamma$ and we say the word length of $\gamma$ is $l(\gamma)$.  We denote the initial letter $S_1$ for $\gamma$ by $\rm{Int}(\gamma)$, and the ending letter $S_{l(\gamma)}$ for $\gamma$ by $\rm{End}(\gamma)$.    Without further mentioning, all the words in this paper are reduced.  \\

Now we can give an asymptotic count for $\# \mathcal{A}_{T,\mathcal{I}}$:
\begin{thm}\label{countpoints} There exists a positive constant $c_0$ such that, as $T$ goes to $\infty$, 
$$\# \mathcal{A}_{T,\mathcal{I}}\sim c_0\mu(\mathcal{I})T^{2\delta}.$$
\end{thm}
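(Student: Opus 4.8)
The plan is to rewrite the defining condition of $\mathcal{A}_{T,\mathcal{I}}$ as a region count in Cartan coordinates and then feed it into Corollary \ref{11081155}. Since $||\gamma||^2 = \kappa(C_\gamma)$, the constraint $||\gamma|| < T$ is exactly $\kappa(C_\gamma) < T^2$, and the direction of $\gamma(1)$ is the tangency of $C_\gamma$ on $\partial\DD$. Because Theorem \ref{OS} and Corollary \ref{11081155} are stated for the orientation-preserving group $\Gamma$, I would first split $\Gamma_0 = \Gamma \sqcup \Gamma\rho_1$ along the orientation homomorphism. For the nontrivial coset I write $\gamma = \eta\rho_1$ with $\eta\in\Gamma$, so that $\gamma(1) = \eta(\rho_1(1))$ and $C_\gamma = \eta(\rho_1(C_I))$; thus this coset is counted by rerunning the orientation-preserving analysis with the base circle $C_I$ replaced by $C_I' := \rho_1(C_I)$, a small circle tangent to $\partial\DD$ at $1' := \rho_1(1)$. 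It therefore suffices to count, for each base circle $C\in\{C_I,C_I'\}$, the number of $\eta\in\Gamma$ with $\kappa(\eta(C)) < T^2$ whose tangency lies in $\mathcal{I}$, and then add the two counts.

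Next I would convert both conditions into conditions on $(\phi_1(\eta),\phi_2(\eta),t(\eta))$ using the explicit formulas (\ref{curvatureformula}) and (\ref{tranformula}). Plugging the parameters $(r,\theta)$ of $C$ into (\ref{curvatureformula}), the curvature is dominated for large $t$ by the $e^{t}$ term, so $\kappa(\eta(C)) < T^2$ becomes $t(\eta) \leq 2\log T + h(\phi_2(\eta)) + o(1)$, with the explicit function $h(\phi) = \log\frac{2r}{(1-r)(1+\cos(\pi-\phi+\theta))}$. Likewise (\ref{tranformula}) shows the tangency of $\eta(C)$ equals $\exp((\phi_1(\eta) + O(e^{-t(\eta)}))\ii)$, so ``tangency $\in\mathcal{I}$'' becomes $\phi_1(\eta)\in\mathcal{I}$ up to an error that is uniformly $o(1)$ once $t(\eta)$ is large. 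Writing $s = t - 2\log T$, the admissible $\eta$ are thus those with $\phi_1(\eta)\in\mathcal{I}$ and $(t(\eta),\phi_2(\eta))$ in a region that, after the vertical shift by $2\log T$, converges to the fixed region $\Omega_0 = \{(s,\phi): s \leq h(\phi)\}$; note the exponent matches since $e^{\delta\cdot 2\log T} = T^{2\delta}$.

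I would then apply Corollary \ref{11081155} with growing parameter $2\log T$. Because $\Omega_0$ is unbounded below in $s$, I truncate at $s\geq -N$: the bounded piece $\Omega_0^{(N)} = \Omega_0\cap\{s\geq -N\}$ is handled directly by the corollary, giving $c_\Gamma\mu(\mathcal{I})S(\Omega_0^{(N)})T^{2\delta}(1+o(1))$, while the complementary slab $t\leq 2\log T - N$ is crudely bounded — dropping the tangency constraint and allowing the full $\phi_2$-range — by Theorem \ref{OS} as $\ll T^{2\delta}e^{-\delta N}$. Sending $T\to\infty$ and then $N\to\infty$ kills the truncation error and gives $S(\Omega_0^{(N)})\to\frac{1}{\delta}\int e^{\delta h(\phi)}\,d\mu(\phi)$, so each base circle contributes a clean constant multiple of $\mu(\mathcal{I})T^{2\delta}$; adding the contributions of $C_I$ and $C_I'$ defines $c_0$ and yields $\#\mathcal{A}_{T,\mathcal{I}}\sim c_0\mu(\mathcal{I})T^{2\delta}$.

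The step I expect to be the main obstacle is controlling the degenerate directions where the $e^{t}$-coefficient $1+\cos(\pi-\phi+\theta)$ in (\ref{curvatureformula}) vanishes, namely $\phi=\theta$: there $h(\phi)\to+\infty$, the integrand $e^{\delta h(\phi)}$ blows up, and even $\Omega_0^{(N)}$ fails to be bounded, so finiteness of $c_0$ is not automatic. The resolution should be that $\theta$ is the angle of the tangency of the base circle — namely $1$ for $C_I$ and $1'=\rho_1(1)$ for $C_I'$ — and since $1$ lies in the flare region and $\Lambda_\Gamma$ is $\Gamma_0$-invariant, both these points lie in gaps of $\Lambda_\Gamma$; hence a neighborhood of each degenerate direction is disjoint from $\mathrm{supp}(\mu)$, so $h$ is bounded on $\mathrm{supp}(\mu)$, $\Omega_0^{(N)}$ is effectively bounded, and $\int e^{\delta h}\,d\mu<\infty$. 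A secondary technical point is to make the $o(1)$ errors in the curvature threshold and in the tangency-versus-$\phi_1$ comparison uniform enough that the thin boundary slivers they create carry negligible $S$-measure, which uses that $\mu$ is atomless exactly as in the proof of Corollary \ref{11081155}.
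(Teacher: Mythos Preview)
Your proposal is correct and follows essentially the same route as the paper: split $\Gamma_0$ into $\Gamma$ and its odd-length coset, convert the curvature and tangency constraints into conditions on $(t(\gamma),\phi_1(\gamma),\phi_2(\gamma))$ via \eqref{curvatureformula}--\eqref{tranformula}, and feed the resulting translated region into Corollary~\ref{11081155}. Your treatment is in fact more careful than the paper's sketch on two points---you truncate to make $\Omega_0$ genuinely bounded before invoking the corollary, and you explicitly verify that the degenerate direction $\phi=\theta$ (for both base circles $C_I$ and $\rho_1(C_I)$) lies in a gap of $\Lambda_\Gamma$ so that $h$ is bounded on $\mathrm{supp}(\mu)$---whereas the paper absorbs both issues into the remark that $\phi_2(\gamma)$ stays uniformly away from the tangency angle.
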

\begin{proof}
We split $\mathcal{A}_{T,\mathcal{I}}$ into two sets as we are working with $\Gamma$ instead of $\Gamma_0$:
\al{\label{11121035}\mathcal{A}_{T,\mathcal{I}}=\left\{\gamma\in\Gamma\Big\vert ||\gamma||<T,\gamma(1)\in\mathcal{I}\right\}\cup \left\{\gamma\in\Gamma\Big\vert ||\gamma S_1||<T, \rm{End}(\gamma)\neq S_1, \gamma S_1(1)\in\mathcal{I}\right\}}
For the first set in the above union, since $1$ is not a limit point, $e^{\phi_1(\gamma)\bd{i}}, e^{\phi_2(\gamma)\bd{i}}$ stays uniformly away from 1, or $\phi_1(\gamma),\phi_2(\gamma)$ stays uniformly away from 0 (more precisely there is an open neighborhood of 1 in $\partial\DD$ such that $e^{\phi_1(\gamma)\bd{i}},e^{\phi_2(\gamma)\bd{i}}$ stays away from this neighborhood with only finitely many exceptions).  From \eqref{curvatureformula}, the condition $$||\gamma||<T$$ is asymptotically equivalent to  
\al{e^{t(\gamma)}\frac{1-r_0}{2r_0}(1+\cos(\pi-\phi_2(\gamma)))< T^2.
}
From \eqref{tranformula}, the condition $$\gamma(1)\in\mathcal{I}$$ is asymptotically equivalent to 
\al{\phi_1(\gamma)\in\mathcal{I}.}
Set \aln{\Omega_T^0=\left\{(t,\theta)\Big\vert e^{t}\frac{1-r_0}{2r_0}(1+\cos(\pi-\phi_2(\gamma)))\leq T, \phi_1(\gamma)\in\mathcal{I}   \right   \},}  then 
\aln{ \left\{\gamma\in\Gamma\Big\vert ||\gamma||<T,\gamma(1)\in\mathcal{I}\right\} \sim \left\{\gamma\in\Gamma \Big\vert \big(t(\gamma),s(\gamma)\big)\in \Omega_{T^2}^0 \right \}  }
Clearly $\Omega_{T}^0=\log T+\Omega_0^0$.  Therefore, applying Corollary \ref{11081155} to $\Omega_{T^2}^0$, we obtain 
$$\left\{\gamma\in\Gamma\Big\vert ||\gamma||<T,\gamma(1)\in\mathcal{I}\right\}\sim c_0^{(1)}\mu(\mathcal{I})T^{2\delta}$$ 
with $c_{0}^{1}=c_{\Gamma}S(\Omega_0^0)$.\\

The second set in the union \eqref{11121035} has an extra condition 
$$\text{End}(\gamma)\neq S_1,$$
which is asymptotically equivalent to 
$$\phi_2(\gamma)\not\in\mathfrak{m}_1,$$
or $$\phi_2(\gamma)\in \mathfrak{m}_2 \cup \mathfrak{m}_3.$$
By the same reasoning as for the first set, we have 
\aln{\left\{\gamma\in\Gamma\Big\vert ||\gamma S_1||<T, \rm{End}(\gamma)\neq S_1, \gamma S_1(1)\in\mathcal{I}\right\}\sim c_0^{(2)}\mu(\mathcal{I})T^{2\delta}}
for some $c_0^{(2)}>0$.
Thus Theorem \ref{countpoints} follows, once we set $c_0=c_0^{(1)}+c_0^{(2)}$.
\end{proof}

 \section{Compare Norms\label{seccompare}}
 In this section we prove several relationships between $||\cdot||$ and $l(\cdot)$.
 
 \begin{lem}\label{equivalentnorms}The norms $||\cdot||$ and $||\cdot||_{\rm{Frob}}$ are equivalent in $\Gamma$.
 \end{lem}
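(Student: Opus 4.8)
The plan is to compute both quantities in the Cartan coordinates $\gamma = k_{\phi_1(\gamma)}a_{t(\gamma)}k_{\pi-\phi_2(\gamma)}$ and to show that each is comparable to $e^{t(\gamma)}$. For the Frobenius norm this is immediate: the normalization $\|\cdot\|_{\mathrm{Frob}}$ is invariant under left and right multiplication by the unitary group $K$, so $\|\gamma\|_{\mathrm{Frob}}^2 = \|a_{t(\gamma)}\|_{\mathrm{Frob}}^2 = \cosh^2\frac{t(\gamma)}{2}+\sinh^2\frac{t(\gamma)}{2} = \cosh t(\gamma)\asymp e^{t(\gamma)}$. For the norm $\|\cdot\|$ I would substitute $r=r_0$ and $\theta = 0$ into the curvature formula \eqref{curvatureformula}; using $\cos(\pi-\phi_2)=-\cos\phi_2$ this gives
\[ \|\gamma\|^2=\kappa(C_\gamma)=\frac{1-r_0}{2r_0}\Big(e^{t(\gamma)}\big(1-\cos\phi_2(\gamma)\big)+e^{-t(\gamma)}\big(1+\cos\phi_2(\gamma)\big)\Big)+1. \]
The whole lemma therefore reduces to the single estimate $\|\gamma\|^2\asymp e^{t(\gamma)}$. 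The upper bound needs no hypothesis: bounding $1\pm\cos\phi_2(\gamma)\le 2$ and $e^{-t(\gamma)}\le e^{t(\gamma)}$ gives $\|\gamma\|^2\le \frac{2(1-r_0)}{r_0}e^{t(\gamma)}+1\ll e^{t(\gamma)}$, valid for every $\gamma$.

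All the content is in the matching lower bound. Discarding the nonnegative $e^{-t}$ term and the constant $1$ leaves $\|\gamma\|^2\ge \frac{1-r_0}{2r_0}e^{t(\gamma)}\big(1-\cos\phi_2(\gamma)\big)$, so it suffices to bound the factor $1-\cos\phi_2(\gamma)$ away from $0$, uniformly over $\Gamma$ up to finitely many elements; equivalently, to keep $e^{\phi_2(\gamma)\ii}$ at a definite Euclidean distance from the basepoint $1$. This is the main obstacle, and it is precisely the geometric fact already invoked in the proof of Theorem \ref{countpoints}. The point $e^{\phi_2(\gamma)\ii}$ is the repelling fixed direction of $\gamma$, equal (since $\phi_2(\gamma)=\phi_1(\gamma^{-1})$) to the tangency direction of $C_{\gamma^{-1}}=\gamma^{-1}(C_I)$, namely $\gamma^{-1}(1)$. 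By the ping-pong dynamics of the Schottky group, for any nontrivial $\gamma$ the point $\gamma^{-1}(1)$ lies in the disk $D_{\mathrm{End}(\gamma)}$, so its boundary angle lies in the closed minor arc $\overline{\mathfrak m_{\mathrm{End}(\gamma)}}\subset\bigcup_i\overline{\mathfrak m_i}\supset\Lambda_\Gamma$. Since $1$ is chosen in the flare region with $C_I\subset\mathcal F\cup\{1\}$ and $r_0<\frac13\min_{i\ne j}d_E(D_i,D_j)$, the basepoint $1$ lies at a positive distance from $\bigcup_i\overline{\mathfrak m_i}$; that is, $1\notin\Lambda_\Gamma$. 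Hence $1-\cos\phi_2(\gamma)\ge c>0$ for all but finitely many $\gamma$.

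To finish I would dispose of the finitely many exceptional $\gamma$ by noting that $\|\gamma\|$ and $\|\gamma\|_{\mathrm{Frob}}$ are both strictly positive on them, so their ratio takes finitely many finite positive values which I absorb into the implied constants; combining the two bounds then yields $\|\gamma\|^2\asymp e^{t(\gamma)}\asymp\|\gamma\|_{\mathrm{Frob}}^2$ on all of $\Gamma$. The one subtlety I expect to spell out carefully is the identification of the Cartan direction $e^{\phi_2(\gamma)\ii}$ with the tangency point $\gamma^{-1}(1)$: the error term $\arcsin(\cdots)$ in \eqref{tranformula} is small only once $\phi_1(\gamma)$ is itself away from $0$, so the bounds on $\phi_1$ and $\phi_2$ are coupled. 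I would break this apparent circularity exactly as in Theorem \ref{countpoints}, using that a fixed neighborhood of $1$ in $\partial\DD$ contains only finitely many of the directions $e^{\phi_1(\gamma)\ii}$ and $e^{\phi_2(\gamma)\ii}$.
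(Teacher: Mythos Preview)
Your proof is correct and follows exactly the paper's approach: compute both norms in Cartan coordinates and show each is $\asymp e^{t(\gamma)}$, the only nontrivial point being that $\phi_2(\gamma)$ stays bounded away from $0$ because $1\notin\Lambda_\Gamma$. You supply considerably more detail for this last step (via the identity $\phi_2(\gamma)=\phi_1(\gamma^{-1})$ and the ping-pong location of $\gamma^{-1}(1)$) than the paper, which simply asserts it.
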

 \begin{proof}
 It's easy to see this from the Cartan Decomposition.  Write $\gamma=k_{\phi_1(\gamma)}a_{t(\gamma)}k_{\phi_2(\gamma)}$. Then $||\gamma||_{\rm{Frob}}^2=||a_{t(\gamma)}||^2=\frac {e^t+e^{-t}}{2}\asymp e^t$, since $||\cdot||_{\text{Frob}}$ is invariant under $K$.  From \eqref{curvatureformula} we have
$$||\gamma||^2=\kappa(\gamma(C))=e^{t}\frac{1-r_0}{2r_0}(1+\cos (\pi-\phi_2(\gamma))+e^{-t}\frac{1-r_0}{2r_0}(1-\cos(\pi- \phi_2(\gamma)))+1.$$
Therefore, we also have $||\gamma||^2\asymp e^t$ since when $t$ large, because $\phi_2(t)$ stays away from $0$ as $1\not\in \Lambda_\Gamma$.
\end{proof}

 \begin{figure}[htbp] %  figure placement: here, top, bottom, or page
    \centering
    \includegraphics[width=2.1in]{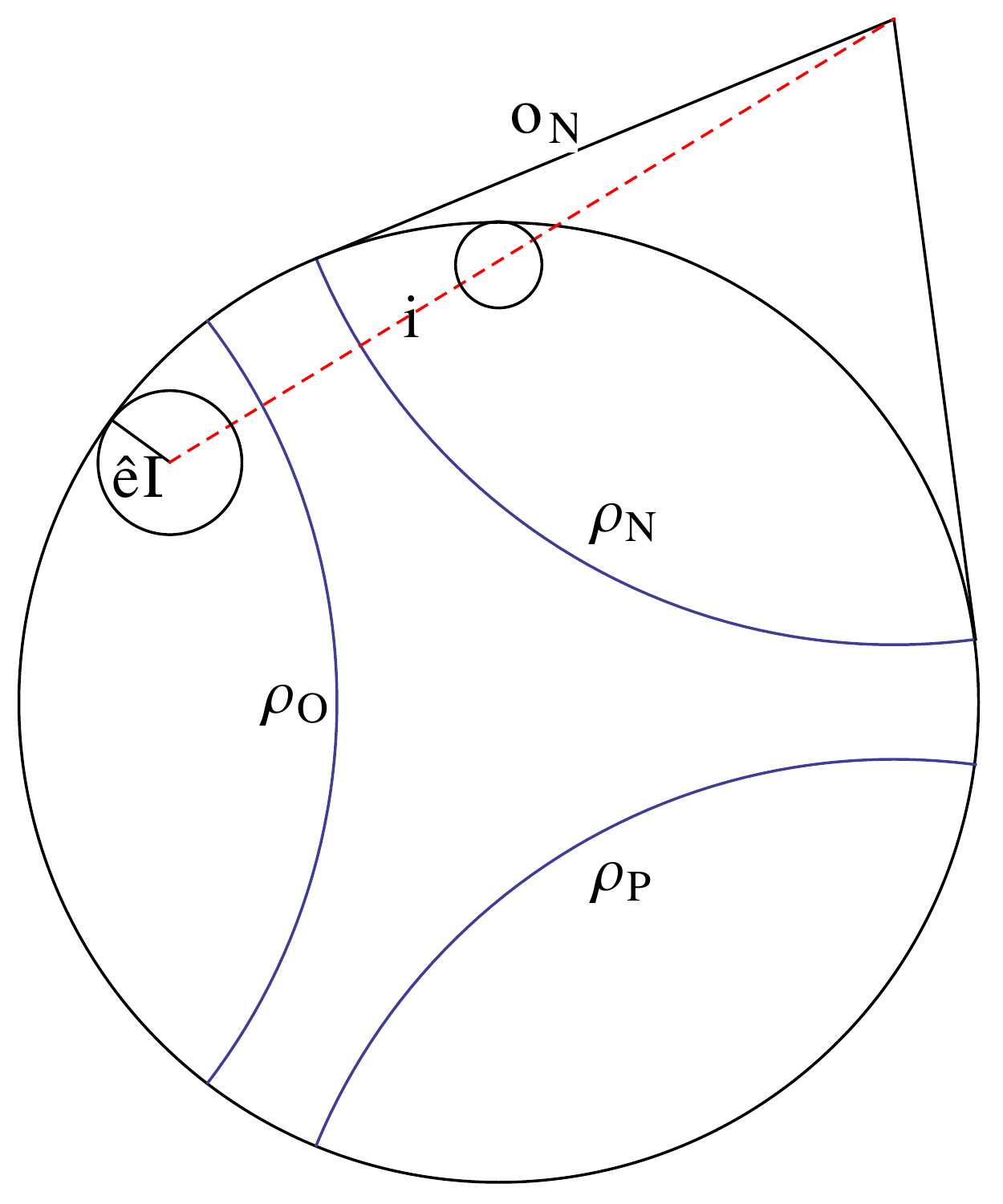} 
   \caption{Reflecting a circle}
    \label{fig1}
 \end{figure}

 Recall $\gamma=S_1S_2\cdots S_{l(\gamma)}$ is the reduced word for $\gamma$.  The following lemma gives a relationship between $||\gamma||$ and $l(\gamma)$. 
 \begin{lem}\label{11021243}There exists $b>a>1$, such that 
 $$a^{l(\gamma)} \ll ||\gamma||\ll b^{l(\gamma)}.$$
 \end{lem}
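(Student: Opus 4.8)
The plan is to show that a \emph{single} reflection multiplies the curvature $\kappa(C_\gamma)=\|\gamma\|^2$ by a factor lying in a fixed interval $[\underline c,\overline c]$ with $1<\underline c\le\overline c$, and then to multiply these factors over the $l(\gamma)$ letters of the reduced word. Write $\gamma=S_1\cdots S_l$ with $l=l(\gamma)$, and for $0\le k\le l$ set $w_k=S_{k+1}\cdots S_l$ and $C^{(k)}=w_k(C_I)$, so that $C^{(l)}=C_I$, $C^{(0)}=C_\gamma$, and $C^{(k-1)}=S_k(C^{(k)})$. Putting $\kappa_k=\kappa(C^{(k)})$ we have $\kappa_l=1/r_0$ and $\kappa_0=\|\gamma\|^2$, and since $\kappa_0=\kappa_l\prod_{k=1}^l(\kappa_{k-1}/\kappa_k)$ it suffices to bound each ratio $\kappa_{k-1}/\kappa_k$ uniformly in $[\underline c,\overline c]$. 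The lemma then follows with $a=\sqrt{\underline c}$ and $b=\sqrt{\overline c}$ (enlarging $b$ if necessary to guarantee $b>a$).

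First I would record the geometric input that makes these factors uniform: for any reduced word $w=T_1\cdots T_m$ one has $w(C_I)\subset\overline{D_{T_1}}$. This is proved by induction on $m$, using that each $\rho_i$ maps the complement of $C_i$ into $D_i$ together with $T_1\ne T_2$, so that $w(C_I)=T_1\big(T_2\cdots T_m(C_I)\big)$ with $T_2\cdots T_m(C_I)\subset\overline{D_{T_2}}$ lying outside $C_{T_1}$. Applied to $w_k$, whose initial letter is $S_{k+1}\ne S_k$, this shows that for $1\le k\le l-1$ the circle $C^{(k)}$ being reflected by $S_k$ sits inside $\overline{D_{S_{k+1}}}$, which by our choice $r_0<\tfrac13\min_{i\ne j}d_E(D_i,D_j)$ is at Euclidean distance $>3r_0$ from the isometry circle $C_{S_k}$. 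The remaining step $k=l$ reflects $C_I\subset\mathcal F$, which lies at some fixed positive distance from every $C_i$; this one step contributes only a bounded factor, harmless for $\ll/\gg$.

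The computational heart is the behaviour of curvature under $\rho_i$, which is the Euclidean inversion in $C_i=C(c_i,R_i)$ and hence maps circles to circles exactly. Inverting a circle of center $p$ and radius $\varrho$ in $C(c_i,R_i)$ produces a circle of radius $R_i^2\varrho/(d^2-\varrho^2)$ with $d=|p-c_i|$, so curvature is multiplied by $(d^2-\varrho^2)/R_i^2$. When the reflected circle lies outside $C_i$ at distance $>3r_0$ we have $d-\varrho-R_i\ge 3r_0$, whence $d^2-\varrho^2=(d-\varrho)(d+\varrho)\ge(R_i+3r_0)^2$, giving the lower factor $\underline c=\min_i(R_i+3r_0)^2/R_i^2>1$; since all points lie in $\overline{\DD}$ we have $d\le 2$, giving the upper factor $\overline c=\max_i 4/R_i^2$. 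Both depend only on $\Gamma$. Multiplying over $k=1,\dots,l-1$ and absorbing the fixed $k=l$ contribution and $\kappa_l=1/r_0$ into the implied constants yields $\underline c^{\,l}\ll\|\gamma\|^2\ll\overline c^{\,l}$, and taking square roots completes the proof.

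The main obstacle is the nesting claim $w(C_I)\subset\overline{D_{T_1}}$ together with the quantitative consequence that $C^{(k)}$ stays at distance $>3r_0$ from the next reflecting circle; this is exactly where the reduced-word hypothesis $S_k\ne S_{k+1}$ and the quantitative choice of $r_0$ enter. It is what upgrades the principality bound $|\rho_i'|\le 1$ (a \emph{non-strict} length contraction, i.e.\ curvature factor $\ge 1$) to a \emph{uniform} curvature factor $\ge\underline c>1$, which is indispensable for the lower bound $a^{l(\gamma)}\ll\|\gamma\|$.
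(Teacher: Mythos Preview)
Your approach is essentially identical to the paper's: both use the explicit curvature-multiplication formula under inversion in $C_i$, namely $\kappa\mapsto\kappa\cdot(d^2-\varrho^2)/R_i^2$, combine it with the nesting $w_k(C_I)\subset\overline{D_{S_{k+1}}}$ furnished by the reduced-word condition $S_k\ne S_{k+1}$, and telescope over the $l(\gamma)$ letters. The paper arrives at the constants $a=\min_i\{1+r_0/(3R_i)\}$ and $b=\max_i\{(R_i+2)^2/R_i^2\}$, but the mechanism is the same as yours.

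There is one computational slip in your upper bound. The centres $c_i$ of the isometry circles $C_i$ do \emph{not} lie in $\overline\DD$: since $C_i$ is orthogonal to $\partial\DD$ one has $|c_i|^2=1+R_i^2>1$. Hence the justification ``all points lie in $\overline\DD$, so $d\le 2$'' is false, and your bound $\overline c=\max_i 4/R_i^2$ need not dominate the true factor (and may even be $<1$ if some $R_i>2$). The correct estimate is
\[
d=|p-c_i|\le |p|+|c_i|<1+\sqrt{1+R_i^2}<R_i+2,
\]
giving $d^2-\varrho^2<(R_i+2)^2$ and hence $\overline c=\max_i(R_i+2)^2/R_i^2$, which is exactly the paper's upper constant. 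With this one-line correction your argument is complete.
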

 \begin{proof}
 
The isometry circles of $\rho_1,\rho_2,\rho_3$ are $C_1,C_2,C_3$, and each of them divides $\partial \DD$ into a major arc $\mathfrak{M}_1,\mathfrak{M}_2,$ or $\mathfrak{M_3}$ and a minor arc $\mathfrak{m}_1,\mathfrak{m}_2,$ or $\mathfrak{m}_3$.   By induction one can show that for each $i$, $S_i\cdots S_n(1)\in\mathfrak{m}_i\subset \mathfrak{M}_{i-1}$. Each $\rho_i$  maps a circle tangent on $\mathfrak{M}_i$ to a circle tangent on $\mathfrak{m}_i$ with smaller radius.  The exact relation is illustrated as follows: 
 
 Let $R_i$ be the radius of $C_i$. Let $C^{'}$ be any circle sitting on $\mathfrak{m}_j(j\neq i)$ with radius $r^{'}<r_0$ so that $C^{'}$ does not intersect $C_i$ (See Figure \ref{fig1} where we set $i=1$ and $j=2$).  Let $L$ be the distance between the centers of $C_i$ and $C^{'}$.  Then $\rho_i$ will map $C^{'}$ tangent to $\mathfrak{m}_j$ with curvature
 \al{\label{10310847}\frac{L^2-{r^{'}}^2}{R_i^2} \frac{1}{r^{'}}.}

 We have the following crude estimate
 $$\frac{(R_i+2)^2}{R_i^2}>\frac{l^2-{r^{'}}^2}{R_i^2}>1+\frac{r_0}{3R_i}.$$
 Let $a=\min_i\{1+\frac{r}{3R_i}\}$ and $b=\max_i\{\frac{(R_i+2)^2}{R_i^2}\}$.  Then by induction we can prove that 
 $$a^n\frac{1}{r_0}< \kappa(\gamma(C)) < b^n\frac{1}{r_0}.$$
 \end{proof}

  The above lemma has the following generalization:
  \begin{lem}\label{11081149}Let $w$ be any reduced word of length $n$.  If $\text{End}(\gamma)\neq\text{Int}(w)$, then 
  $$a^n||\gamma||\ll ||w\cdot\gamma||\ll b^n||\gamma||.$$
  Similarly,  if $\text{Int}(\gamma)\neq\text{End}(w)$, then 
  $$a^n||\gamma||\ll ||\gamma\cdot w||\ll b^n||\gamma||.$$
  \end{lem}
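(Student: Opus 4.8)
The plan is to reduce Lemma \ref{11081149} to the already-established Lemma \ref{11021243} by tracking how the curvature-multiplier formula \eqref{10310847} behaves when we prepend (or append) a reduced word. The key structural fact exploited in the proof of Lemma \ref{11021243} is that each generator $\rho_i$, applied to a circle sitting on $\mathfrak{m}_j$ with $j\neq i$ and small radius, multiplies the curvature by a factor $\frac{L^2-{r'}^2}{R_i^2}$ lying in the bounded range $(a,b)$, independent of the particular small circle. So first I would observe that the condition $\text{End}(\gamma)\neq\text{Int}(w)$ is precisely what guarantees that when we form $w\cdot\gamma$, the reduced word is the concatenation of the reduced word for $w$ followed by the reduced word for $\gamma$, with no cancellation at the junction; this is what lets the letters of $w$ act one after another on a circle already produced by $\gamma$.

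Next I would make the geometric setup explicit. Let $C_\gamma=\gamma(C_I)$ be the circle of curvature $\|\gamma\|^2=\kappa(C_\gamma)$ tangent to $\partial\DD$ at $\gamma(1)$, and note that $\gamma(1)\in\mathfrak{m}_{\text{Int}(\gamma)}$, so $C_\gamma$ sits on a minor arc. Writing $w=S_1'\cdots S_n'$ in reduced form, the element $w\gamma$ carries $C_I$ to $S_1'\cdots S_n'(C_\gamma)$. Applying the outermost generator $S_n'=\rho_{i_n}$ to $C_\gamma$: since $\text{Int}(\gamma)\neq\text{End}(w)=S_n'$, the circle $C_\gamma$ lies on $\mathfrak{m}_{\text{Int}(\gamma)}$ with $\text{Int}(\gamma)\neq i_n$, so formula \eqref{10310847} applies and multiplies the curvature by a factor in $(a,b)$, producing a new circle again tangent on a minor arc $\mathfrak{m}_{i_n}$. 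The point is that after one application we are in exactly the same configuration (a small circle sitting on a minor arc, radius $<r_0$ since curvature only grows), so the hypothesis for the next letter $S_{n-1}'\neq S_n'$ is automatically met by the reducedness of $w$. Iterating $n$ times, the total multiplier is a product of $n$ factors each in $(a,b)$, giving
$$a^n\,\kappa(C_\gamma)\ll\kappa(w(C_\gamma))\ll b^n\,\kappa(C_\gamma),$$
which is the claimed $a^n\|\gamma\|\ll\|w\gamma\|\ll b^n\|\gamma\|$ after taking square roots and absorbing constants (note $\|\cdot\|^2$ is the curvature, so the same $a,b$ work up to the implied constants). The second assertion, about $\gamma\cdot w$, follows by the symmetric argument, or more efficiently by applying the first part to inverses: $\|\gamma w\|=\|(w^{-1}\gamma^{-1})\|^{-1}$-type relations together with the fact that $\|g\|\asymp\|g^{-1}\|$ (both are $\asymp e^{t(g)/2}$ by Lemma \ref{equivalentnorms}, and $t(g)=t(g^{-1})$), so the bounds transfer with the roles of $\text{Int}$ and $\text{End}$ interchanged.

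I expect the main obstacle to be the bookkeeping needed to verify that the configuration genuinely regenerates at each step, i.e.\ that after applying $S_k'$ the resulting circle still sits on a minor arc with radius below $r_0$ and is positioned so that the \emph{next} factor again lands in $(a,b)$ rather than in some degenerate regime. Because curvature is nondecreasing under the generators (each factor exceeds $1+\frac{r_0}{3R_i}>1$), the radius stays below $r_0$ throughout, so the hypothesis $r'<r_0$ in \eqref{10310847} is preserved; the containment $S_i\cdots S_n(1)\in\mathfrak{m}_i\subset\mathfrak{M}_{i-1}$ noted in the proof of Lemma \ref{11021243} is what keeps each intermediate tangency point on the correct arc. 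Making this induction clean, and checking that the implied constants in $\ll$ do not accumulate with $n$ (they should not, since each step's multiplier is \emph{exactly} in $(a,b)$ and only the initial comparison between $\|\gamma\|^2$ and $\kappa(C_\gamma)$ absorbs a constant), is the part that requires care but no new ideas.
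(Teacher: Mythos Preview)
Your proposal is correct and follows essentially the same line as the paper: for the first assertion you iterate the curvature-multiplier estimate \eqref{10310847} from Lemma \ref{11021243} on the circle $C_\gamma=\gamma(C_I)$, and for the second you reduce to the first by passing to inverses via $\|g\|\asymp\|g\|_{\rm Frob}=\|g^{-1}\|_{\rm Frob}\asymp\|g^{-1}\|$. One small slip: in your opening paragraph you assert that $\text{End}(\gamma)\neq\text{Int}(w)$ is the no-cancellation condition for $w\cdot\gamma$, but of course the junction in $w\gamma$ is between $\text{End}(w)$ and $\text{Int}(\gamma)$; you silently switch to the correct condition in the next paragraph, so the argument itself is unaffected, but the exposition should be cleaned up (the lemma's stated hypotheses appear to have the two cases swapped, and you may wish to note this).
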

  \begin{proof}
  If $\rm{Int}(\gamma)=\rho_i$ and $\text{End}(w)=\rho_j$ with $j\neq i$, then $\gamma(C_I)$ is a circle sitting on $\mathfrak{m}_i$.  Then a same argument as in Lemma \ref{11021243} shows that 
  $$a^n \kappa(\gamma(C)) \ll \kappa(w(\gamma(C)))\ll b^n \kappa(\gamma(C)),$$
  or $$a^n||\gamma||\ll ||w\cdot\gamma||\ll b^n||\gamma||.$$
  The second case is reduced to the first case immediately, using the fact that $||\gamma \cdot w||\asymp ||\gamma \cdot w||_{\rm{Frob}}$ and $||\gamma \cdot w||_{\rm{Frob}}=||(\gamma \cdot w)^{-1}||_{\rm{Frob}}$.
  \end{proof}
  We will also need the following lemma:
\begin{lem}\label{02141541}Let $\rho_i,\rho_j,\rho_k$ be three different letter from $\{\rho_1,\rho_2,\rho_3\}$ and $\gamma\in\Gamma_0$ be any word with $\text{End}(\gamma)=\rho_i$, then we have 
$$||\gamma(\rho_j\rho_k)^n ||\asymp ||\gamma(\rho_k\rho_j)^n ||,$$
and the implied constants in independent of $n$.
\end{lem}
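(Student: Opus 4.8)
The plan is to reduce the entire statement to a comparison of curvatures of image circles, exploiting the very definition $||\delta||^2=\kappa(\delta(C_I))$ together with the explicit transformation formula \eqref{curvatureformula}. Write $g=\rho_j\rho_k$, so that $(\rho_j\rho_k)^n=g^n$ and $(\rho_k\rho_j)^n=g^{-n}$. Since $\rho_j\rho_k$ has even length it lies in $\Gamma$, and being a product of two reflections in disjoint circles it is hyperbolic, with an attracting fixed point $p^+$ and a repelling fixed point $p^-$ on $\partial\DD$. Both $p^\pm$ lie in $\Lambda_\Gamma$ while $1\notin\Lambda_\Gamma$, so $g^n(1)\to p^+$ and $g^{-n}(1)\to p^-$ as $n\to\infty$. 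I would then prove the lemma as the chain
\[ ||\gamma g^n||\asymp||g^n||\asymp||g^{-n}||\asymp||\gamma g^{-n}||, \]
with every implied constant independent of $n$, handling the three comparisons in turn.

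For the middle comparison I would use that $g^n,g^{-n}\in\Gamma$ together with Lemma \ref{equivalentnorms}. Since $g^{-n}=(g^n)^{-1}$ and the entries of $M^{-1}$ for $M\in PSU(1,1)$ are $\bar\xi,-\eta,-\bar\eta,\xi$, which match those of $M$ in absolute value, the Frobenius norm is inversion‑invariant, so $||g^n||_{\mathrm{Frob}}=||g^{-n}||_{\mathrm{Frob}}$. Combined with $||\cdot||\asymp||\cdot||_{\mathrm{Frob}}$ on $\Gamma$ (whose constant depends only on $\Gamma$, hence not on $n$) this gives $||g^n||\asymp||g^{-n}||$ at once.

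For the two outer comparisons I would argue geometrically. By definition $||\gamma g^n||^2=\kappa(\gamma(D_n))$, where $D_n:=g^n(C_I)$ is the circle internally tangent to $\partial\DD$ at $g^n(1)=e^{\theta_n\bd{i}}$ with radius $r_n=1/||g^n||^2=1/\kappa(D_n)$. Substituting $\theta=\theta_n$, $r=r_n$ into \eqref{curvatureformula} and dividing by $\kappa(D_n)=1/r_n$ gives
\[ \frac{||\gamma g^n||^2}{||g^n||^2}=(1-r_n)\,h(\gamma,\theta_n)+r_n, \]
where, writing $\beta(\theta)=\pi-\phi_2(\gamma)+\theta$,
\[ h(\gamma,\theta)=\tfrac12\big(e^{t(\gamma)}(1+\cos\beta(\theta))+e^{-t(\gamma)}(1-\cos\beta(\theta))\big). \]
For fixed $\gamma$ the function $h(\gamma,\cdot)$ is continuous and strictly positive (at most one of the two cosine factors can vanish). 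As $n\to\infty$ we have $r_n\to0$ and $\theta_n\to\theta^+:=\arg p^+$, so the ratio tends to the finite positive limit $h(\gamma,\theta^+)$; hence it is bounded above and below by positive constants uniformly in $n$, i.e. $||\gamma g^n||\asymp||g^n||$. The identical computation applied to $g^{-n}$, whose tangent points converge to $\theta^-:=\arg p^-$, yields $||\gamma g^{-n}||\asymp||g^{-n}||$, and concatenating the three estimates proves the claim.

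The main obstacle is exactly the uniformity in $n$: a crude estimate such as Lemma \ref{11081149} would introduce factors $a^{2n},b^{2n}$ that blow up, so it is essential to extract the honest limit of the curvature ratio rather than merely bound it. This is what forces the use of the exact formula \eqref{curvatureformula} and of the convergence of the tangent points $\theta_n^{\pm}$ to the fixed points of the hyperbolic element $g$. I note finally that the hypothesis $\mathrm{End}(\gamma)=\rho_i$ with $i\neq j,k$ is what guarantees that $\gamma g^{\pm n}$ are genuine reduced words, even though the curvature estimate itself is valid for any fixed $\gamma\in PSU(1,1)$.
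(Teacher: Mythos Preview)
Your chain $||\gamma g^n||\asymp||g^n||\asymp||g^{-n}||\asymp||\gamma g^{-n}||$ breaks at the two outer links once you ask for uniformity in $\gamma$, which the lemma requires: in the paper's convention every implied constant depends only on $\Gamma$, and the applications in \S\ref{secparametrization} (the choice of $N_1$, the bound on $l(w')$, and Theorem~\ref{comparable}) all need constants independent of $\gamma$ as well as of $n$. From your own computation,
\[
\frac{||\gamma g^n||^2}{||g^n||^2}=(1-r_n)\,h(\gamma,\theta_n)+r_n
\;\sim\;\tfrac12\,e^{t(\gamma)}\bigl(1+\cos(\pi-\phi_2(\gamma)+\theta_n)\bigr)
\]
for $t(\gamma)$ large, and this ratio is unbounded as $\gamma$ ranges over $\Gamma_0$. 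So $||\gamma g^n||\asymp||g^n||$ is simply false with $\gamma$-independent constants, and your argument only produces a version of the lemma whose constants depend on the particular $\gamma$ --- which is useless downstream.

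The paper circumvents this by comparing $\kappa(\gamma g^n(C_I))$ and $\kappa(\gamma g^{-n}(C_I))$ \emph{directly}, so that the $e^{t(\gamma)}$ factors cancel. What is then left to control (after your middle step $||g^n||\asymp||g^{-n}||$, which is fine) is essentially the ratio
\[
\frac{1+\cos(\pi-\phi_2(\gamma)+\theta_{g^n})}{1+\cos(\pi-\phi_2(\gamma)+\theta_{g^{-n}})},
\]
and one must show both numerator and denominator stay uniformly bounded away from $0$. This is precisely where the hypothesis $\text{End}(\gamma)=\rho_i$ does real work: for $l(\gamma)$ sufficiently large it forces $\phi_2(\gamma)$ into the minor arc $\mathfrak{m}_i$, while $\theta_{g^n}\in\mathfrak{m}_j$ and $\theta_{g^{-n}}\in\mathfrak{m}_k$; the three arcs being disjoint keeps both angles $\pi-\phi_2(\gamma)+\theta_{g^{\pm n}}$ uniformly away from $\pi$. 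Your closing remark that this hypothesis only guarantees reducedness of $\gamma g^{\pm n}$ therefore misses its main role in the argument.
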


\begin{proof}
We first assume $\gamma\in \Gamma$.  There exists a constant $N_0$ which depends only on $\Gamma$ such that whenever $l(\gamma)>N_0$, the condition $\rm{End}(\gamma)=\rho_i$ is equivalent to the condition $\phi_1(\gamma)\in\alpha_i$.\\

We can exclude the cases when both $l(\gamma)\leq N_0$ and $n\leq\frac{N_0}{2}$ since there are only finitely many such $\gamma$'s and $n$'s.  We just deal with the case $l(\gamma)>N_0$; the case $n>\frac{N_0}{2}$ is similar.  \\

Since $\left((\rho_j\rho_k)^n\right)^{-1}=(\rho_k\rho_j)^n$, we have $\kappa((\rho_j\rho_k)^n(C_{\mathcal{I}}))\asymp \kappa((\rho_k\rho_j)^n(C_{\mathcal{I}}))$.  Since $(\rho_j\rho_k)^n(C)$ has tangencies $t_1,t_2$ in $\alpha_j,\alpha_k$, and $\phi_2(\gamma)\in\alpha_i$, we have both $\phi_2(\gamma)+\pi-t_1 $ and $\phi_2(\gamma)+\pi-t_2 $ are bounded away from $\pi$.  We can apply \eqref{curvatureformula} to  
$(\rho_j\rho_k)^n(C_{\mathcal{I}})$ and $(\rho_k\rho_j)^n(C_{\mathcal{I}})$ to see that 
$$\kappa(\gamma(\rho_j\rho_k)^n(C_{\mathcal{I}}))\asymp \kappa(\gamma(\rho_k\rho_j)^n(C_{\mathcal{I}})),$$
or $$||\gamma(\rho_j\rho_k)^n||\asymp ||\gamma(\rho_k\rho_j)^n ||.$$

If $\gamma\not\in\Gamma$, then write $\gamma=\rho\gamma^{'}$ where $\rho\in\{\rho_1,\rho_2,\rho_3\}$, $\rho\neq \text{Int}(\gamma^{'})$ and $\gamma^{'}\in\Gamma$.  Then $\kappa\left(\gamma^{'}(\rho_i\rho_j)^n(C_{\mathcal{I}})\right) \asymp \kappa\left(\gamma^{'}(\rho_j\rho_i)^n(C_{\mathcal{I}})\right)$.  Using the argument in Lemma \ref{11021243} we see that 
$$a\cdot\kappa\left(\gamma^{'}(\rho_i\rho_j)^n(C_{\mathcal{I}})\right)< \kappa\left(\gamma(\rho_i\rho_j)^n(C_{\mathcal{I}})\right)<b\cdot\kappa\left(\gamma^{'}(\rho_i\rho_j)^n(C_{\mathcal{I}})\right)$$
and
$$a\cdot\kappa\left(\gamma^{'}(\rho_j\rho_i)^n(C_{\mathcal{I}})\right)< \kappa\left(\gamma(\rho_j\rho_i)^n(C_{\mathcal{I}})\right)<b\cdot\kappa\left(\gamma^{'}(\rho_j\rho_i)^n(C_{\mathcal{I}})\right).$$
Therefore, we also have 
$$||\gamma(\rho_j\rho_k)^n||\asymp ||\gamma(\rho_k\rho_j)^n ||$$
in this case. 
\end{proof}

\section{Parametrization of gaps\label{secparametrization}}

Each gap formed by $\mathcal{A}_{T,\mathcal{I}}$ has two end points $\gamma_1(1),\gamma_2(1)$ with $||\gamma_1||,||\gamma_2||<T$.  For each $\gamma_1(1)$, we want to determine its neighbors.  Figure \ref{fig2} illustrates that circles tend to accumulate at some ``dense" parts while are sparse at other parts (see Figure \ref{densesparse}).  If $\gamma_1(C_I)$ lie in a dense part, typically the tail of the word of $\gamma_1$ contains a string $\rho_1\rho_3\rho_1\rho_3\cdots$; if $\gamma_1(C_I)$ lie in a sparse part, typically the tail of $\gamma_1$ contains a string $\rho_1\rho_2\rho_1\rho_2\cdots$ or $\rho_2\rho_3\rho_2\rho_3\cdots$.  The neighbor determination is different in these two cases.  This suggests us to divide $\Gamma_0$ according to the ending of the words.  \\

First we choose an integer $N_1$ that satisfies the following conditions:
\begin{enumerate}
\item For any $\gamma\in\Gamma_0$ with $\rm{End}(\gamma)=\rho_i$, 
$||\gamma\rho_j||\leq ||\gamma \rho_k w||$ for any reduced word $w$ 
with $l(w)\geq N_1$ and $\text{Int}(w)\neq \rho_k$.
\item For  any $\gamma\in\Gamma_0$ with $\rm{End}(\gamma)=\rho_i$, 
$||\gamma(\rho_j\rho_k)^n|| \leq \gamma(\rho_k\rho_j)^n w$ for any reduced 
word $w$ with $l(w)\geq N_1$ and $\text{Int}(w)\neq \rho_j$.
\end{enumerate}

Let $\mathcal{W}_{N_1}$ be the collection of $w\in\Gamma_0$ with $l(w)=N_1$ or $N_1+1$. We drop out those elements $\gamma_1$ in $\Gamma_0$ with $l(\gamma_1)\leq N_1$, and those $\gamma_1$ of the form $\gamma_1=(\rho_i\rho_k)^nw$ or $\rho_k(\rho_i\rho_k)^nw$ and call such elements \emph{sporadic} (there are only $\ll \log T$ such elements in $\mathcal{B}_T$ so this doesn't affect the limiting gap distribution).  Then we can divide the rest terms in $\Gamma_0$ into disjoint classes $\mathcal{C}_{i,j,k}^n$, where again $\rho_i,\rho_j,\rho_k$ are three different letters from $\{\rho_1,\rho_2,\rho_3\}$:

$$\mathcal{C}_{i,j,k}^n=\{\gamma_1=\gamma (\rho_j\rho_k)^nw\vert \gamma\in \Gamma, w\in\mathcal{W}_{N_1}, \text{End}(\gamma)= \rho_i, \text{Int} (w)\neq \rho_k \}.$$
Our goal is to determine $\gamma_2(1)$ for each $\gamma_1(1)$ in $\mathcal{A}_{T,\mathcal{\partial \DD}}$.
Each point in $\gamma\rho_i(1)\in\mathcal{C}_{i,j,k}^{n}$ has two sides.  For convenience, we assign $-$ to the side where $\gamma\rho_i(\rho_j\rho_k)^{n-1}\rho_j\rho_i(1)$ lies in, and $+$ to the other side.  And we are going to give asymptotically equivalent conditions that determine the neighbor from each side.\\

Write $\gamma=S_1S_2\cdots S_{l(\gamma)}$ in reduced form.  Since $S_{l(\gamma)}$ is from $\{\rho_1,\rho_2,\rho_3\}$, it corresponds to a minor arc $\mathfrak{m}$ from $\{\mathfrak{m}_1,\mathfrak{m}_2,\mathfrak{m}_3\}$.  We then define $\mathfrak{m}_{\gamma}$ to be the arc $S_1S_2\cdots S_{l(\gamma)-1}(\mathfrak{m})$.  We have the following simple observation:
 \begin{lem}\label{immediate} If $\gamma  (1)$ lies on one side of $\gamma \beta (1)$ in $\mathcal{A}_T$, then the neighbor of $\gamma \beta (1)$ on this side is of the form $\gamma \beta^{'} (1)$.
 \end{lem}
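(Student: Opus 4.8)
The plan is to exploit the nested ``cylinder'' structure of the orbit $\Gamma_0(1)$ coming from the Schottky coding, together with the fact that $\gamma$ permutes the orbit, in order to show that every orbit point caught between $\gamma\beta(1)$ and $\gamma(1)$ carries $\gamma$ as a reduced prefix. First I would record the basic localization fact behind the whole paper's geometry: since $\rho_i$ sends the exterior of $C_i$ into the region $D_i$ bounded by $C_i$ and $\mathfrak{m}_i$, any reduced word $\zeta$ with $\text{Int}(\zeta)=\rho_i$ satisfies $\zeta(1)\in\mathfrak{m}_i$, the three minor arcs are pairwise disjoint, and $1$ lies in none of them (it sits in the flare between $C_1$ and $C_3$, and $1\notin\Lambda_\Gamma$). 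Writing $\rho_s=\text{End}(\gamma)$ and $\mathcal{U}=\partial\DD\setminus\overline{\mathfrak{m}_s}$, this gives the clean equivalence: an orbit point $\zeta(1)$ lies in $\mathcal{U}$ if and only if $\text{Int}(\zeta)\neq\rho_s$ (or $\zeta=e$), i.e. if and only if $\gamma\zeta$ is reduced with $\gamma$ as an initial segment.

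Next I would introduce the cylinder arc $I_\gamma:=\gamma(\mathcal{U})$. Because $\gamma$ is a homeomorphism of $\partial\DD$ that permutes $\Gamma_0(1)$, the orbit points inside $I_\gamma$ are exactly the $\gamma\zeta(1)$ with $\zeta(1)\in\mathcal{U}$, that is, exactly the points of the form $\gamma\beta'(1)$ with $\gamma\beta'$ reduced (allowing $\beta'=e$, which yields $\gamma(1)$). Both endpoints of interest lie in $I_\gamma$: the point $\gamma(1)$ because $1\in\mathcal{U}$, and $\gamma\beta(1)$ because the hypothesis $\text{Int}(\beta)\neq\text{End}(\gamma)=\rho_s$ forces $\beta(1)\in\mathcal{U}$. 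As $\mathcal{U}$ is an open arc and $1,\beta(1)$ lie in its interior, both $\gamma(1)$ and $\gamma\beta(1)$ lie in the interior of the open arc $I_\gamma$, so the sub-arc $J$ joining $\gamma\beta(1)$ to $\gamma(1)$ that stays inside $I_\gamma$ is well defined, and by the previous paragraph its interior meets $\Gamma_0(1)$ only in points of the form $\gamma\beta'(1)$.

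To close the argument I would use $\gamma(1)$ as a cap. Since $\gamma(1)\in\mathcal{A}_T$ lies on the side under consideration, the neighbor of $\gamma\beta(1)$ on that side lies in the closed arc from $\gamma\beta(1)$ to $\gamma(1)$ --- in the extreme case it is $\gamma(1)$ itself --- and hence in $\overline{J}\subset\overline{I_\gamma}$. Combining this with the localization of orbit points inside $I_\gamma$, the neighbor is a point $\gamma\beta'(1)$, as claimed.

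\textbf{Main obstacle.} The delicate point is the orientation/side bookkeeping: one must check that ``the side on which $\gamma(1)$ lies'' is precisely the side $J$ staying inside $I_\gamma$, and not the complementary arc $J'$ that exits $I_\gamma$. This matters, because on $J'$ one does encounter non-$\gamma$-prefix orbit points: a point $\zeta(1)$ with $\text{Int}(\zeta)=\rho_s$ produces $\gamma\zeta=\gamma'$ after the cancellation $\rho_s\rho_s=e$, so $\gamma\zeta(1)=\gamma'(1)$ has $\gamma$ stripped of its last letter and the conclusion genuinely fails on that side. The selection of the correct side is exactly where the standing hypotheses are used: the flare position of $1$ between $C_1$ and $C_3$, together with the fixed cyclic arrangement and pairwise disjointness of $\mathfrak{m}_1,\mathfrak{m}_2,\mathfrak{m}_3$, pin down on which side of $\beta(1)$ the base point $1$ sits within $\mathcal{U}$, and thus which of the two arcs between $\gamma\beta(1)$ and $\gamma(1)$ is the cylinder-internal arc $J$.
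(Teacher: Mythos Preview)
Your proof is correct and is essentially the same argument as the paper's. Your cylinder arc $I_\gamma=\gamma(\mathcal{U})$ is literally the paper's arc $\mathfrak{m}_\gamma$: with $\gamma=S_1\cdots S_{n-1}\rho_s$, the reflection $\rho_s$ carries $\mathcal{U}=\partial\DD\setminus\overline{\mathfrak{m}_s}$ onto $\mathfrak{m}_s$, so $I_\gamma=S_1\cdots S_{n-1}(\mathfrak{m}_s)=\mathfrak{m}_\gamma$. The paper's two-line proof then reads: both $\gamma(1)$ and $\gamma\beta(1)$ lie in $\mathfrak{m}_\gamma$, the neighbor on the $\gamma(1)$-side is trapped between them inside $\mathfrak{m}_\gamma$, and every orbit point in $\mathfrak{m}_\gamma$ has $\gamma$ as a reduced prefix---exactly your localization statement.

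One comment on your ``main obstacle'': it is not an obstacle at all, and the appeal to the flare position of $1$ and the cyclic order of the $\mathfrak{m}_i$ is unnecessary here. The hypothesis of the lemma \emph{defines} the side in question as ``the side on which $\gamma(1)$ lies''; since $\gamma(1)\in\mathcal{A}_T$ (indeed $\|\gamma\|<\|\gamma\beta\|<T$), the neighbor of $\gamma\beta(1)$ on that side is, by definition of neighbor, no farther than $\gamma(1)$ in that direction and hence lies on the sub-arc $J\subset I_\gamma$ joining $\gamma\beta(1)$ to $\gamma(1)$. There is no separate orientation check to perform; the paper's proof accordingly omits it.
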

 \begin{proof} Both $\gamma (1)$ and $\gamma\beta(1)$ lie in the arc $\alpha_{\gamma}$ and $||\gamma||<||\gamma\beta||$, so the neighbor of $\gamma \beta(1)$ on the side of $\gamma(1)$ also lies in the arc $\alpha_{\gamma}$.  Therefore, this neighbor is of the form $\gamma \beta^{'}(1)$.
 \end{proof}
 \begin{figure}
\centering
\begin{minipage}{.45\textwidth}
  \centering
  \includegraphics[width=.7\linewidth]{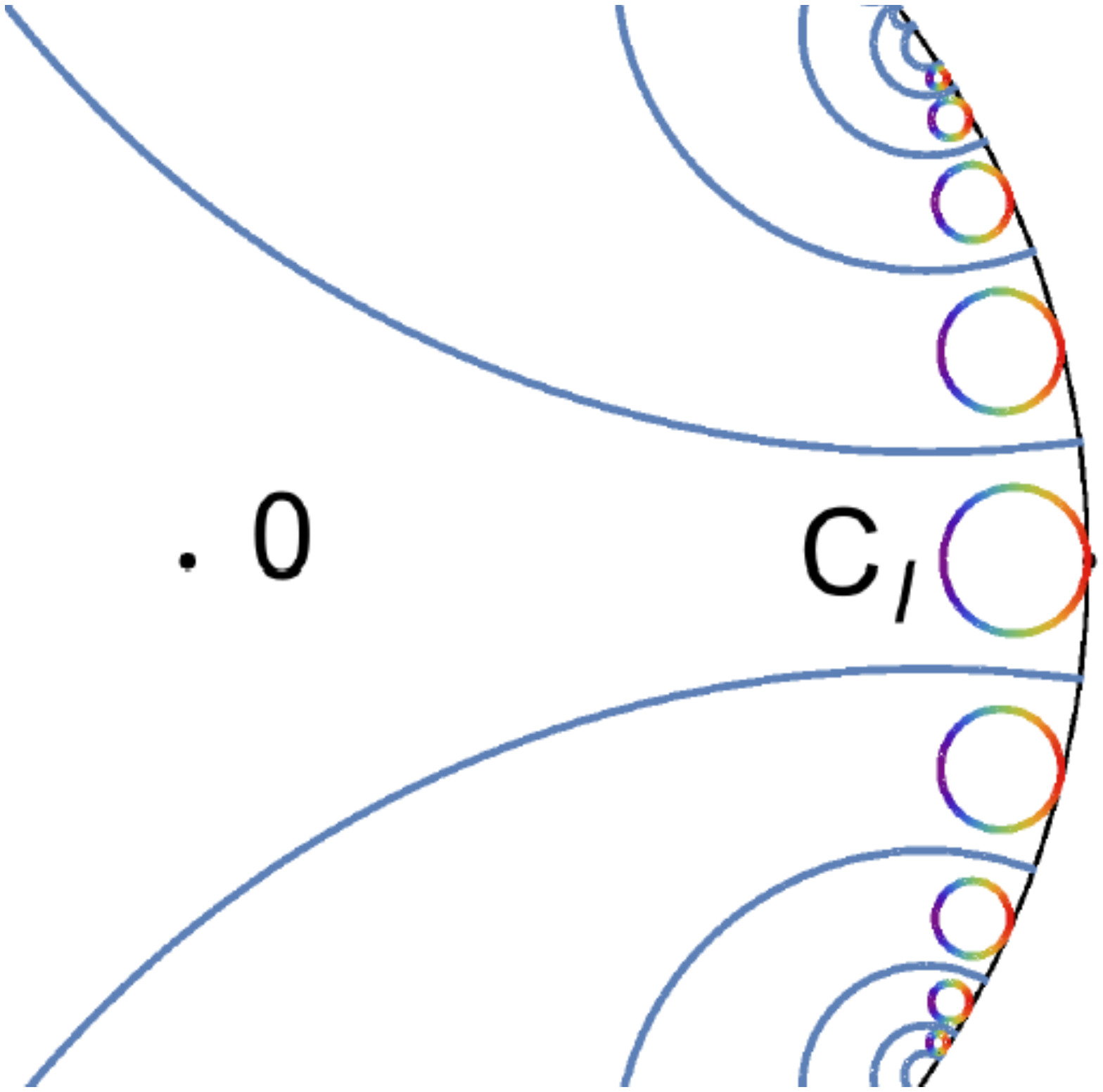}
  %\caption{figure}{A figure}
  %\label{fig:test1}

\end{minipage}%
\begin{minipage}{.5\textwidth}
  \centering
  \includegraphics[width=.7\linewidth]{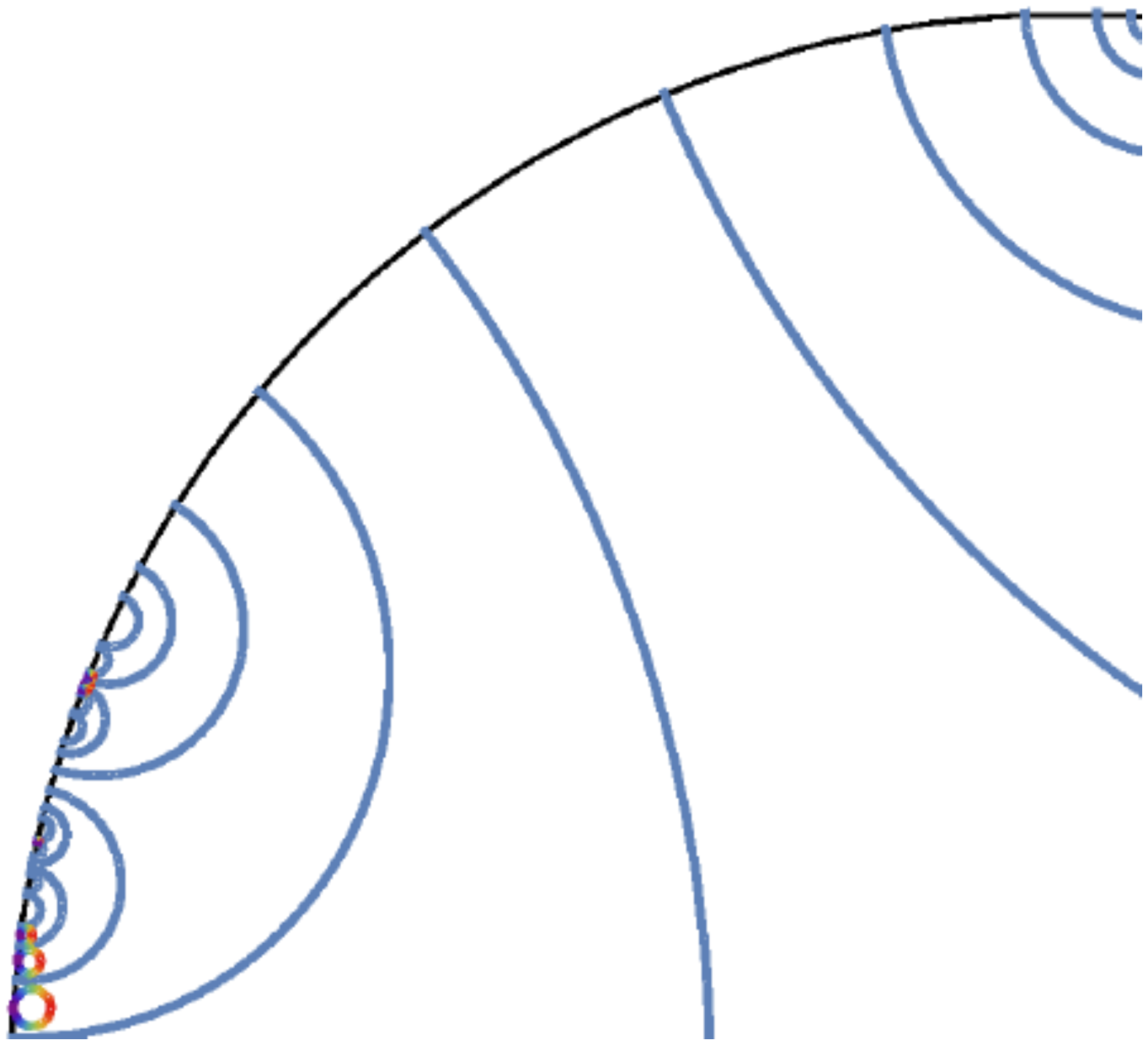}

%\label{fig:test2}
\end{minipage}
 \caption{A dense part and a sparse part}
 \label{densesparse}
\end{figure}

This lemma helps us to determine what the dominated neighbor of $\gamma 1$ should be like.  Figure \ref{fig2} illustrates that circles tend to accumulate at some parts, while at other parts the circles are sparse (see Figure \ref{densesparse}).  This exactly corresponds to two typical cases.  We only analyze one example from each case.  \\

If a tangency is of the form $\gamma (\rho_1\rho_2)^n w(1)$ with $\text{End}(\gamma)=\rho_3, w\in\mathcal{W}_{N_1}$ and $\text{Int}(w)\neq \rho_2$, which is a representative for the first case, then on the $+$ side lies $\gamma (\rho_1\rho_2)^n(1)$.  Since $||\gamma (\rho_1\rho_2)^n||<||\gamma (\rho_1\rho_2)^nw||$,  from Lemma \ref{immediate}, the neighbor $\gamma_2(1)$ on this side is of the form $\gamma(\rho_1\rho_2)^{n} w^{'}(1)$ with $\text{Int}(w^{'})\neq \rho_2$.  On the $-$ direction lies $\gamma (\rho_1\rho_2)^{n-1}\rho_1\rho_3 (1)$ and $||\gamma (\rho_1\rho_2)^{n-1}\rho_1\rho_3||<||\gamma (\rho_1\rho_2)^nw||$.  So Lemma \ref{immediate} tells us that the neighborhood on the other side of $\gamma(\rho_1\rho_2)^n (1)$ is of the form $\gamma(\rho_1\rho_2)^{n-1}\rho_1\rho_3 w^{'}(1)$ with $\text{Int}(w^{'})\neq \rho_3$.   \\
 
A representative for the second case is that $\gamma_1=\gamma(\rho_2\rho_3)^n w$ with $\text{End}(\gamma)=\rho_1, w\in\mathcal{W}_{N_1}$ and $\text{Int}(w)\neq \rho_3$.  The neighbor on the $-$ side lies either in $\mathfrak{m}_{\gamma (\rho_2\rho_3)^{n}}$ or $\mathfrak{m}_{\gamma (\rho_2\rho_3)^{n-1}\rho_2\rho_1w^{'}}$, so the neighbor on this side is of the form $\gamma (\rho_2\rho_3)^{n}w^{'}$ or $\gamma (\rho_2\rho_3)^{n-1}\rho_2\rho_1w^{'}$.  On the $+$ side the neighbor must lie in either  $\mathfrak{m}_{\gamma (\rho_2\rho_3)^{n}w^{'}}$ or $\mathfrak{m}_{\gamma(\rho_3\rho_2)^{n}w^{'}}$, because   $\mathfrak{m}_{\gamma (\rho_2\rho_3)^{n}w^{'}}$ or $\mathfrak{m}_{\gamma.(\rho_3\rho_2)^{n}w^{'}}$ are the closest arcs on this side to $\gamma_1(1)$ that contains orbit points, and  $||\gamma (\rho_2\rho_3)^{n}||,||\gamma(\rho_3\rho_2)^{n}||\leq ||\gamma_1||$ from Lemma \ref{11081149} and Lemma \ref{02141541}.  Thus this neighbor must be of the form $\gamma (\rho_2\rho_3)^{n}w^{'}(1)$ or $\gamma(\rho_3\rho_2)^{n}w^{'}(1)$.  \\

In fact, the above argument also show that in each case, if $\gamma_2(1)$ is a neighbor of $\gamma_1(1)$, then we must have $||\gamma_2||\ll_{N_1}||\gamma_1||$, using Lemma \ref{11081149} and Lemma \ref{02141541}.  Since being neighbors is a mutual relation, we also have $||\gamma_2||\gg_{N_1}||\gamma_1||$.  Thus we have the following theorem:

\begin{thm}\label{comparable}If $\gamma_1(1)$ and $\gamma_2(1)$ are neighbors in $\mathcal{A_{T,\partial\DD}}$, then $||\gamma_1||\asymp ||\gamma_2||$, and the implied constant depends only on the group $\Gamma$.
\end{thm}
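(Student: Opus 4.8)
The plan is to prove the statement by reducing it to the two representative cases already worked out in the discussion preceding the theorem, and then arguing that the neighbor-determining analysis there produces word-length comparisons of bounded depth. Concretely, I would first recall that every non-sporadic $\gamma_1\in\Gamma_0$ lies in exactly one class $\mathcal{C}_{i,j,k}^n$, so that $\gamma_1=\gamma(\rho_j\rho_k)^nw$ with $\text{End}(\gamma)=\rho_i$, $w\in\mathcal{W}_{N_1}$, and $\text{Int}(w)\neq\rho_k$. The sporadic elements can be discarded at this stage, since they number only $\ll\log T$ in $\mathcal{B}_T$ and in particular each individual sporadic point only interacts with boundedly many neighbors, which I can bound directly using Lemma \ref{11081149}.

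Next I would observe that the neighbor analysis carried out above shows, in each of the two representative cases, that the neighbor $\gamma_2(1)$ of $\gamma_1(1)$ on either side has the shape $\gamma\cdot\beta(1)$, where $\beta$ differs from $(\rho_j\rho_k)^nw$ only in a suffix of bounded word length (the exponent $n$ is preserved up to $\pm 1$, and the tail $w$ or $w'$ has length $N_1$ or $N_1+1$). This is the heart of the matter: the common prefix $\gamma(\rho_j\rho_k)^{n'}$ (with $n'=n$ or $n-1$) carries essentially all of the word length, and the two endpoints differ only by a bounded-length correction governed by $N_1$. Having isolated this structure, I would invoke Lemma \ref{11081149} to compare $\|\gamma_1\|=\|\gamma(\rho_j\rho_k)^nw\|$ and $\|\gamma_2\|=\|\gamma\beta\|$ against the common core $\|\gamma(\rho_j\rho_k)^{n'}\|$, obtaining
$$a^{-C}\|\gamma(\rho_j\rho_k)^{n'}\|\ll\|\gamma_i\|\ll b^{C}\|\gamma(\rho_j\rho_k)^{n'}\|$$
for $i=1,2$, where $C$ is a bound on the suffix lengths depending only on $N_1$. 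Since $\|\gamma(\rho_1\rho_2)^{n}\|\asymp\|\gamma(\rho_2\rho_1)^{n}\|$ by Lemma \ref{02141541}, the choice $n'=n$ versus $n'=n-1$ and the swap of $\rho_j\rho_k$ with $\rho_k\rho_j$ are both absorbed into the implied constant, uniformly in $n$. Combining these two-sided estimates for $\gamma_1$ and $\gamma_2$ yields $\|\gamma_1\|\asymp\|\gamma_2\|$ with a constant depending only on $N_1$, hence only on $\Gamma$.

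The main obstacle I anticipate is \emph{uniformity in $n$}: I must guarantee that the comparison constant does not degrade as the exponent $n$ grows, since $n$ can be arbitrarily large. This is precisely why Lemma \ref{02141541} is indispensable — it provides an $n$-independent equivalence between $\|\gamma(\rho_j\rho_k)^n\|$ and $\|\gamma(\rho_k\rho_j)^n\|$, which is exactly what is needed when a neighbor lives on the reversed-string side $\gamma(\rho_3\rho_2)^nw'$ in the second case. A secondary subtlety is that when $\gamma_1\notin\Gamma$ (i.e.\ $\gamma_1$ has odd word length so only $\gamma_1\rho$ lies in the free subgroup $\Gamma$), I should pass to $\Gamma$ via a single bounded letter, absorbing the one-letter discrepancy into the constant exactly as in the final paragraph of the proof of Lemma \ref{02141541}. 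Finally, since \emph{neighbor} is a symmetric relation, once I have $\|\gamma_2\|\ll\|\gamma_1\|$ I immediately get the reverse by swapping the roles of $\gamma_1$ and $\gamma_2$, so both bounds follow from the same case analysis and the proof closes.
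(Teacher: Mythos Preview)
Your approach is the paper's approach: classify $\gamma_1$ into a family $\mathcal{C}_{i,j,k}^n$, use the case analysis preceding the theorem to pin down the shape of the neighbor $\gamma_2$, invoke Lemmas~\ref{11081149} and~\ref{02141541}, and finish by the symmetry of the neighbor relation. However, there is a genuine circularity in the step where you assert that ``the tail $w$ or $w'$ has length $N_1$ or $N_1+1$.'' The preceding case analysis establishes only that $\gamma_2=\gamma(\rho_j\rho_k)^{n'}w'$ (or the reversed-string variant) for \emph{some} $w'$; it says nothing about $l(w')$. The bound on $l(w')$ is stated in the paper \emph{after} Theorem~\ref{comparable}, explicitly as a consequence of it (the constant $N_2$). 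So your displayed two-sided estimate $\|\gamma_2\|\ll b^{C}\|\gamma(\rho_j\rho_k)^{n'}\|$ is not available at this point, and with it your direction $\|\gamma_2\|\ll\|\gamma_1\|$ collapses.

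The fix is to argue the \emph{other} inequality first. From Lemma~\ref{11081149} alone one has $\|\gamma_2\|=\|\gamma(\rho_j\rho_k)^{n'}w'\|\gg a^{l(w')}\|\gamma(\rho_j\rho_k)^{n'}\|\geq \|\gamma(\rho_j\rho_k)^{n'}\|$, with implied constant independent of $l(w')$; in the reversed-string subcase replace this by $\|\gamma(\rho_k\rho_j)^{n}\|\asymp\|\gamma(\rho_j\rho_k)^{n}\|$ via Lemma~\ref{02141541}. On the other side, since $l(w)\leq N_1+1$ is known, $\|\gamma_1\|=\|\gamma(\rho_j\rho_k)^{n}w\|\ll b^{N_1+1}\|\gamma(\rho_j\rho_k)^{n}\|\asymp\|\gamma(\rho_j\rho_k)^{n'}\|$. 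Combining gives $\|\gamma_2\|\gg_{N_1}\|\gamma_1\|$, and now your symmetry step legitimately yields $\|\gamma_1\|\gg_{N_1}\|\gamma_2\|$. (The paper's sentence before the theorem appears to have the inequality written in the opposite direction, but the substance is this one.) Once you reorder the argument this way, your handling of the uniformity in $n$ via Lemma~\ref{02141541} and of the $\Gamma_0\setminus\Gamma$ parity issue is fine.
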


Theorem \ref{comparable}, together with  Lemma \ref{11081149} and Lemma \ref{02141541} also restricts the length of $w^{'}$ in each case.  We know there exists a positive constant $N_2$ which only depends on $N_1$, thus only depends on the group $\gamma$, such that for any $w^{'}$ appearing in any of the above settings, we have $l(w^{'})\leq N_2$.   \\

For each $\gamma_1\in\Omega_{i,j,k}^n \cdot w$ with $w\in\mathcal{W}_{N_1}$ and $\text{Int}{(w)}\neq \rho_k$ (i.e. $ \gamma_1=\gamma(\rho_j\rho_k)^n w$ with $\text{End}(\gamma_1)=\rho_i$), and each direction $\square\in\{+,-\}$ we order all possible choices for $w^{'}$ from closest to furthest to $\gamma_1(1)$ in this direction:  $u_1^{\square}, u_2^{\square},\cdots,u_{N_{w}^{\square}}^{\square}$.  It should be noticed that this order depends only on $i,j,k$ and is independent of $\gamma$ and $n$ because $\Gamma_0$ is generated by $\rho_1,\rho_2,\rho_3$, each of which is a diffeomorphism on $\partial\DD$ and preserves order.  For our later notational convenience we let $v=  (\rho_j\rho_k)^n \cdot w$ and $v_m^{\square}=(\rho_j\rho_k)^n \cdot u_m^{\square}$.

 \section{Analyzing the gap distribution function \label{secgap}} 
We let $\tilde{\Omega}_{i,j,k}^{n}(w,\square,l,s; T)$ be the set of $\gamma\in\Gamma$ such that $\rm{End}(\gamma)=\rho_i$, $\gamma (\rho_j\rho_k)^{n}w(1)$, $\gamma (\rho_j\rho_k)^{n}w_{l^{\square}}(1)$ are two end points of a gap in $\mathcal{A}_{T,\mathcal{I}}$ with relative length $\leq s$.  Here $w\in\mathcal{W}_{N_1}$ with $\text{Int}(w)\neq \rho_k$, and $\square\in\{+,-\}$.\\

The condition $\gamma\in\Omega_{i,j,k}^{n}(w,\square,l,s; T)$ is equivalent to the following conditions:
\al{\label{02141724}&\gamma v(1)\in\mathcal{I}\\
\label{110814470}&\rm{End}(\gamma)= \rho_i,\\
\label{110814471}&||\gamma v ||< T^2,\\
\label{110814472}&||\gamma v_m^{\square}||\geq T^2|\text{ for $1\leq m< l$},\\
\label{110814473}&  ||\gamma v_{l^{\square}}||<T^2,\\
\label{110814474}& \frac{d(\gamma v,\gamma v_{l}^{{\square}})}{T^2}\leq s.
}

The condition $\gamma v(1)\in\mathcal{I}$ is asymptotically equivalent to the condition 
\al{\label{02141724}\phi_2(\gamma)\in \alpha_{i}. } 

The condition $\rm{End}(\gamma)= \rho_i$ is asymptotically equivalent to the condition 
\al{\label{11082009}\phi_2(\gamma)\in \alpha_{i}. } 

Recall the notation $C_{\gamma}=\gamma (C_\mathcal{I})=C(1-r_\gamma,\theta_\gamma)$.  From \eqref{curvatureformula} we have
\aln{||\gamma v_m^{\square} ||&=e^{t(\gamma)}\frac{1-r_{v_m^{\square}}}{2r_{v_m^{\square}}}(1+\cos(\pi-\phi_2{(\gamma)}+\theta_{v_m}))+e^{-t(\gamma)}\frac{1-r_{v_m^{\square}}}{2r_{v_m^{\square}}}(1-\cos(\pi-\phi_2{(\gamma)}+\theta_{v_m^{\square}}))+1 \\
&\sim e^{t(\gamma)}\frac{1-r_{v_m^{\square}}}{2r_{v_m^{\square}}}(1+\cos(\pi-\phi_2(\gamma)+\theta_{v_m^{\square}})).
}

Therefore, conditions \eqref{110814471}, \eqref{110814472}, \eqref{110814473} are asymptotically equivalent to 
\al{\label{11082006}&e^{t(\gamma)}\frac{1-r_{v}}{2r_{_{v}}}(1+\cos(\pi-\phi_2(\gamma)+\theta_v))<T^2,\\
\label{11082007}& \frac{1-r_{v_m^{\square}}}{2r_{_{v_m^{\square}}}}(1+\cos(\pi-\phi_2(\gamma)+\theta_{v_m^{\square}})  \geq T^2,\\
\label{11082008}&  \frac{1-r_{v_l^{\square}}}{2r_{_{v_l^{\square}}}}(1+\cos(\pi-\phi_2(\gamma)+\theta_{v_l^{\square}}) < T^2.}
It is a direct computation that 
$$\gamma v_m^{\square}(1)=e^{\phi_1(\gamma)\bd{i}}\cdot\frac{\cosh\frac{t(\gamma)}{2}e^{\phi_2(\gamma)\bd{i}}v_m^{\square}(1)+\sinh \frac{t(\gamma)}{2}}{\sinh\frac{t(\gamma)}{2}e^{\phi_2(\gamma)\bd{i}}v_m^{\square}(1)+\cosh \frac{t(\gamma)}{2}}.$$
Therefore,  
\al{\label{11081825}d(\gamma v(1),\gamma v_m^{\square}(1))\sim 2e^{-t(\gamma)}\Big\vert\tan\left(\frac{\pi-\phi_2(\gamma)+\theta_v}{2}\right)- \tan\left(\frac{\pi-\phi_2(\gamma)+\theta_{v_m^{\square}}}{2}\right)\Big\vert,}
so the condition \eqref{110814474} is asymptotically equivalent to 
\al{ \label{11082010}
\frac{e^{t(\gamma)}}{2\Big\vert\tan\left(\frac{\pi-\phi_2(\gamma)+\theta_v}{2}\right)- \tan\left(\frac{\pi-\phi_2(\gamma)+\theta_{v_m^{\square}}}{2}\right)\Big\vert}\geq \frac{T^2}{s}.
}
We let ${\Omega}_{i,j,k}^{n}(w,\square,l,s; T)$ be the collection of $(t,\theta)$ which satisfies \eqref{02141724}, \eqref{11082009}, \eqref{11082006}, \eqref{11082007}, \eqref{11082008} and \eqref{11082010}.  
It's clear that ${\Omega}_{i,j,k}^{n}(w,\square,l,s; T)=2\log T+{\Omega}_{i,j,k}^{n}(w,\square,l,s; 0)$.\\

Now we can analyze the gap distribution function $F_{T,\mathcal{I}}(s)$ defined in \eqref{11082323}.
\begin{proof}[Proof of Theorem \ref{mainthm}]
First we drop out those sporadic terms, 
$$F_{T,\mathcal{I}}(s)=\frac{1}{2}\sum_{i,j,k}\sum_{w\in\mathcal{W}_{N_1}}^ {* }\sum_{\square=\{i,j\}}\sum_{l\leq N_{w}^{\square}}\sum_{n=0}^{\infty}\frac{\#\tilde{\Omega}_{i,j,k}^n(w,\square,l,s;T)}{c_oT^{2\delta}}+O\left(\frac{\log T}{T^{2\delta}}\right).$$
There's a factor $1/2$ because each gap is exactly counted twice.  The symbol $*$ means that the sum is restricted to those terms $w$ with $\text{Int}(w)\neq \rho_k$. We also drop all the terms with $n>M$ in the innermost sum of the above, for some big number $M$.  The contribution from these terms to $F_{T,\mathcal{I}}(s)$ is $O( {1}/{a^M})$ from Lemma \ref{11081149}.  We have
\al{
F_{T,\mathcal{I}}(s)=\frac{1}{2}\sum_{i,j,k}\sum_{w\in\mathcal{W}_{N_1}}^ {* }\sum_{\square=\{i,j\}}\sum_{l\leq N_{w}^{\square}}\sum_{n=0}^{M}\frac{\#\tilde{\Omega}_{i,j,k}^n(w,\square,l,s;T)}{c_oT^{2\delta}}+O\left(\frac{\log T}{T^{2\delta}}\right)+O\left(\frac{1}{a^{M}}\right).
}
Now we have a finite sum. Replacing $\tilde{\Omega}$ by $\Omega$ (since we are doing asymptotics) and applying Corollary \ref{11081155}, we obtain
\al{\nonumber\liminf_{T\rightarrow\infty} F_{T,\mathcal{I}}(s)\leq \frac{1}{2}\sum_{i,j,k}\sum_{w\in\mathcal{W}_{N_1}}^ {* }\sum_{\square=\{i,j\}}\sum_{l\leq N_{w}^{\square}}\sum_{n=0}^{M}\frac{S(\Omega_{i,j,k}^n(w,\square,l,s;1))}{c_0} -\frac{c_1}{a^{M}}\\
 \leq \frac{1}{2}\sum_{i,j,k}\sum_{w\in\mathcal{W}_{N_1}}^ {* }\sum_{\square=\{i,j\}}\sum_{l\leq N_{w}^{\square}}\sum_{n=0}^{M}\frac{S(\Omega_{i,j,k}^n(w,\square,l,s;1))}{c_0} +\frac{c_2}{a^{M}}\leq \limsup_{T\rightarrow\infty}F_{T,\mathcal{I}}(s),
}
for some positive absolute constant $c_1,c_2$.
Let $M$ go to $\infty$, we obtain 
\al{\lim_{T\rightarrow\infty}F_{T,\mathcal{I}}(s)=\frac{1}{2} \sum_{i,j,k}\sum_{w\in\mathcal{W}_{N_1}}^ {* }\sum_{\square=\{i,j\}}\sum_{l\leq N_{w}^{\square}}\sum_{n=0}^{\infty}\frac{S(\Omega_{i,j,k}^n(w,\square,l,s;1))}{c_0}.}

Thus the existence of the limiting distribution is proved.  \\

We can define $\tilde{\Omega}_{i,j,k}^n(w,\square,l,\infty;T)$, ${\Omega}_{i,j,k}^n(w,\square,l,\infty;T)$ in the same way as $\tilde{\Omega}_{i,j,k}^n(w,\square,l,s;T)$,
${\Omega}_{i,j,k}^n(w,\square,l,s;T)$ except removing conditions \eqref{110814474},\eqref{11082010}.  An identical argument as above shows that 
\al{\frac{1}{2}\sum_{i,j,k}\sum_{w\in\mathcal{W}_{N_1}}^ {* }\sum_{\square=\{i,j\}}\sum_{l\leq N_{w}^{\square}}\sum_{n=0}^{\infty}\frac{S(\Omega_{i,j,k}^n(w,\square,l,s;1))}{c_0}=1.}
Since each $S(\Omega_{i,j,k}^n(w,\square,l,s;1))$ is continuous, and 
$$\lim_{s\rightarrow\infty}S(\Omega_{i,j,k}^n(w,\square,l,s;1))=S(\Omega_{i,j,k}^n(w,\square,l,\infty;1)),$$
we obtain $F(s)$ is continuous and $$\lim_{s\rightarrow\infty}F(s)=1.$$

To show that $F$ is supported away from 0, we give a universal positive constant $s_0$ such that $S(\Omega_{i,j,k}^n(w,\square,l,s;1))=0$ when $s<s_0$, for all $i,j,k,n$.  From \ref{11082010} it's enough to give a universal upper bound for 
\al{\label{11091322}\frac{e^{t(\gamma)}}{2\Big\vert\tan\left(\frac{\pi-\phi_2(\gamma)+\theta_v}{2}\right)- \tan\left(\frac{\pi-\phi_2(\gamma)+\theta_{v_m^{\square}}}{2}\right)\Big\vert T^2}}
From \eqref{11082006}, 
\al{\nonumber\eqref{11091322}&\leq \frac{2r_{v}}{(1-r_{v})(1+\cos(\pi-\phi_2(\gamma)+\theta_v))\Big\vert\tan\left(\frac{\pi-\phi_2(\gamma)+\theta_v}{2}\right)- \tan\left(\frac{\pi-\phi_2(\gamma)+\theta_{v_m^{\square}}}{2}\right)\Big\vert }\\
&\label{11091346}\ll      \frac{r_{v}}{\Big\vert\tan\left(\frac{\pi-\phi_2(\gamma)+\theta_v}{2}\right)- \tan\left(\frac{\pi-\phi_2(\gamma)+\theta_{v_m^{\square}}}{2}\right)\Big\vert }
,}
since $\cos(\pi-\phi_2(\gamma)+\theta_v)$ stays uniformly away from $-1$ as $\text{End}(\gamma)\neq \text{Int}(v)$.   \\

We again only analyze two typical cases.  If $\{j,k\}=\{1,3\}$, then 
$$\Big\vert\tan\left(\frac{\pi-\phi_2(\gamma)+\theta_v}{2}\right)- \tan\left(\frac{\pi-\phi_2(\gamma)+\theta_{v_m^{\square}}}{2}\right)\Big\vert \gg d(\theta_v,\theta(v_m^{\square}))\gg ||(\rho_1\rho_2)^n||^{-1}d(\theta_w,\theta_{u_m^{\square}})$$
From Lemma \ref{11081149} $$r_{v}=||(\rho_1\rho_2)^nw||^{-1}\ll ||(\rho_1\rho_2)^n ||^{-1}a^{-n}.$$
Therefore, we have 
$$\eqref{11091346}\ll 1 ,$$
since we have only finitely many possible choices for $w$ and $u_m^{\square}$.\\
If  $\{j,k\}\neq \{1,2\}$, there's an extra situation when $\theta_v\in\mathfrak{m}_1$ and $\theta_{v_m^{\square}}\in \mathfrak{m}_2$. In this situation
$$\Big\vert\tan\left(\frac{\pi-\phi_2(\gamma)+\theta_v}{2}\right)- \tan\left(\frac{\pi-\phi_2(\gamma)+\theta_{v_m^{\square}}}{2}\right)\Big\vert \gg 1,$$
so we also have
$$\eqref{11091346}\ll 1 .$$
Therefore, such $s_0$ exists, so that $F$ is supported away from 0.  
\end{proof}

\appendix
\section{A random point process on the limit set} \label{AppendixA}
We prove the limiting behavior of a random point process on the limit set $\Lambda_\Gamma$.  The argument is an adaption from the appendix of \cite{BSR12} where they prove the limiting nearest spacing distribution of a random point process on $\mathbb{S}^n$ is Poisson.\\

Let $\tilde{d}(x,y)$ be the counterclockwise distance from $x$ to $y$ on $\partial \DD$ (so $\tilde{d}(\cdot,\cdot)$ is not symmetric, and $\tilde{d}(x,y)=2\pi-\tilde{d}(y,x)$ if $x\neq y$).  Let $I_r (\cdot, \cdot)$ be the indicate function such that $I_r (x,y)=1$ if $d(x,y)\leq r$ and $I_r (x,y)=0$ otherwise.  Let $P_1,P_2,..., P_N$ be $N$ independent random variables distributed according to the Patterson Sullivan measure $\mu$ on $\partial \DD$ and $d_1,d_2,\cdots,d_N$ be the length of the oriented gaps with starting starting points $P_1,P_2,\cdots, P_N$, respectively. Note that these $d_1,d_2,\cdots, d_N$ are not independent.  We expect the $d_1,d_2,\cdots, d_N$ are of the scale $N^{-\frac{1}{\delta}}$, so we define the distribution of scaled gap distribution in the following way: 
\al{\nu_N(s)=\nu(P_1,\cdots,P_N) (s):=\sum_{i=1}^{N}\frac{1}{N}\bd{1}_{s=d_iN^{\frac{1}{\delta}}}}
Let $s\geq 0$, we examine the expectation 
\al{\ee\left(\nu_N([0,s]) \right)=\ee\left( \frac{1}{N}\sum_{i=1}^N \bd{1}\{d_i N^{\frac{1}{\delta}}\leq s\} \right)\\
=\ee\left(\frac{1}{N}\sum_{i=1}^N\bd{1}\left\{\min_{j\neq i}\{\tilde{d}(P_i,P_j)\}\leq \frac{s}{N^{\frac{1}{\delta}}}\right\}\right)
}
  
We define 
\al{A_k (P_1,P_2,\cdots,P_N):= \frac{1}{N}\sum_{i=1}^N\sum_{\substack{j_i,\cdots,j_k\neq i\\j_1,\cdots,j_k\text{ distinct}}} \prod_{l=1}^k\bd{1}\{\tilde{d}(P_i,P_{j_l})\leq \frac{s}{N^{\frac{1}{\delta}}}\}}
and let $B_n=\sum_{i=1}^n(-1)^{i+1}A_i$.
Using inclusion-exclusion, we have 
\al{\ee\left(\nu_N([0,s]\right) \leq \ee(B_{2n+1})=\ee\left(A_1-A_2+\cdots+A_{2n+1}\right)\label{upperbound}}
and 
\al{\ee\left(\nu_N([0,s]\right) \geq \ee(B_{2n})=\ee\left(A_1-A_2+\cdots-A_{2n}\right)\label{lowerbound}}
For each $k$,
\al{\ee(A_k)=&\ee\left( \frac{1}{N}\sum_{i=1}^N\sum_{\substack{j_i,\cdots,j_k\neq i\\j_1,\cdots,j_k\text{ distinct}}} \prod_{l=1}^k\bd{1}\{d(P_i,P_{j_l})\leq \frac{s}{N^{\frac{1}{\delta}}}\}\right)\nonumber\\
&=\frac{(N-1)(N-2)\cdots(N-k)}{k!}\int_{\partial\DD}\mu\left(L\left(x,\frac{s}{N^{\frac{1}{\delta}}}\right)\right)^kd\mu(x),\label{02062238}
}
where $L(x,\eta)$ is the interval $\{y\in\partial \DD|\tilde{d}(x,y)\leq \eta\}$.

Now according to Proposition 3 and the comment in Section 3 from \cite{Su79}, there exists a positive constant $\eta_0$ and two positive constants $0<c<C$ such that if $x\in \Lambda_\Gamma$ and $\eta<\eta_0$, we have universal upper and lower bounds for the ratio $p(x,\eta)=\frac{\mu(L(x,\eta))}{\eta^{\delta}}$:
\al{c<p(x,\eta)<C\label{02070907}}
This can not be improved (i.e, the limit of $p(x,\eta) $ when $\eta\rightarrow 0$ does not exist for most points) according to Corollary 4.10 from \cite{Fa86}.  \\

We rewrite \eqref{02062238} as 
\al{\label{02070129}\ee(A_k)=   \frac{(N-1)(N-2)\cdots(N-k)s^{\delta k}}{N^kk!}\int_{\partial\DD}\left(p\left(x,sN^{-\frac{1}{\delta}}\right)\right)^kd\mu(x) }\\
As $N$ goes to infinity, 
\al{\label{02070135}\ee(A_k)\rightarrow \frac{s^{\delta k}}{k!}\int_{\partial\DD}\left(p\left(x,sN^{-\frac{1}{\delta}}\right)\right)^kd\mu(x)}

We can also give an upper bound for $\text{Var}(A_k)$, the variance of $A_k$.  Using Cauchy-H\"older inequality, 
\al{\nonumber\text{Var}(A_k)&=\ee\left(\left(A_k-\ee(A_k)\right)^2\right)\nonumber\\
&\leq  \frac{1}{N^2k!}\sum_{i=1}^N\sum_{\substack{j_i,\cdots,j_k\neq i\\j_1,\cdots,j_k\text{ distinct}}} \text{Var}\left( \prod_{l=1}^k\bd{1}\{d(P_i,P_{j_l})\leq \frac{s}{N^{\frac{1}{\delta}}}\}\right)\nonumber\\
&\leq \frac{s^{\delta k}}{Nk!} \int_{\partial\DD}\left(p\left(x,sN^{-\frac{1}{\delta}}\right)\right)^kd\mu(x) \left(1-\frac{s^{\delta k}}{N^k}\int_{\partial\DD}\left(p\left(x,sN^{-\frac{1}{\delta}}\right)\right)^kd\mu(x)\right)\nonumber\\
&\leq \frac{s^{\delta k}}{Nk!} \int_{\partial\DD}\left(p\left(x,sN^{-\frac{1}{\delta}}\right)\right)^kd\mu(x) \leq \frac{(s^{\delta}C)^k}{Nk!}
}

Therefore, for any $N$, we have
\al{\sum_{k=1}^\infty \text{Var}(A_k)\leq \frac{e^{s^{\delta}C}}{N}}
From \eqref{02070135}, \eqref{upperbound} and \eqref{lowerbound},  we know that 
\al{\ee(\nu_N([0,s]))\rightarrow 1-\int_{\partial\DD}e^{-s^{\delta} p\left(x,{s}{N^{-\frac{1}{\delta}}}\right)}d\mu(x):= 1-Z_N(s) }
We give an upper bound of $\text{Var}(\nu_N([0,s]))$ in the following way:
\al{
\text{Var}(\nu_N([0,s]))=&\ee\left((\nu_N([0,s])-Z_N(s))^2\right)\nonumber\\
\leq& \ee\left((B_{2n+1}-Z_N(s))^2\right)+\ee\left((B_{2n}-Z_N(s))^2\right)\nonumber\\
\ll &\ee\left((B_{2n+1}-\ee\left(B_{2n+1}\right))^2\right)+ \ee\left((B_{2n}-\ee\left(B_{2n}\right))^2\right)\nonumber\\&+\left(\ee\left(B_{2n+1}\right)-Z_N(s)\right)^2+\left(\ee\left(B_{2n}\right)-Z_N(s)\right)^2\nonumber\\
\ll&\frac{e^{Cs^{\delta}}}{N}+\frac{n^2}{N}+\frac{e^{Cs^{\delta }(2n+1)}}{2n!}\label{02070855}
}

Set $n=[\log N]$, the integral part of $\log N$, then from \eqref{02070855}, as $N$ goes to infinity, $\text{Var} (\nu_N([0,s]))$ goes to 0.  A direct application of Chebyshev's inequality leads to the following theorem:
\begin{thm}\label{limitingdistribution}
As $N$ goes to infinity, $ \nu_N([0,s])$ converges to $Z_N(s)$ almost surely.  
\end{thm}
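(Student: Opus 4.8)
The plan is to upgrade the mean-square estimate already in hand into almost sure convergence by a Chebyshev--Borel--Cantelli argument along a polynomial subsequence, followed by an interpolation across the intervening indices. Throughout I would regard $P_1, P_2, \dots$ as a single i.i.d.\ sequence sampled from $(\partial\DD, \mu)$, so that all the $\nu_N([0,s])$ live on one probability space and the phrase ``$\nu_N([0,s]) \to Z_N(s)$ almost surely'' is read as $\nu_N([0,s]) - Z_N(s) \to 0$ almost surely.

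The essential input is the bound assembled in \eqref{02070855}: with the choice $n = [\log N]$,
$$\ee\left(\left(\nu_N([0,s]) - Z_N(s)\right)^2\right) \ll \frac{e^{Cs^\delta}}{N} + \frac{(\log N)^2}{N} + \frac{e^{Cs^\delta(2\log N + 1)}}{(2\log N)!}.$$
For each fixed $s$ all three terms vanish as $N\to\infty$: the first two are $O((\log N)^2/N)$, and the third is $O(N^{2Cs^\delta}/(2\log N)!)$, which decays faster than any negative power of $N$ because $(2\log N)!$ grows super-polynomially. Thus $\nu_N([0,s]) \to Z_N(s)$ in $L^2$, and Chebyshev's inequality gives, for every $\varepsilon > 0$,
$$\Pr\left(\left|\nu_N([0,s]) - Z_N(s)\right| > \varepsilon\right) \leq \varepsilon^{-2}\,\ee\left(\left(\nu_N([0,s]) - Z_N(s)\right)^2\right) \ll_{\varepsilon,s} \frac{(\log N)^2}{N}.$$

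Next I would restrict to the subsequence $N_k = k^2$. Along it the right-hand side is $\ll_{\varepsilon,s}(\log k)^2/k^2$, which is summable in $k$, and the exponential--factorial term remains summable since $(4\log k)! = (2\log N_k)!$ outgrows any fixed power $k^{4Cs^\delta}$. The Borel--Cantelli lemma then yields $\nu_{N_k}([0,s]) - Z_{N_k}(s) \to 0$ almost surely as $k \to \infty$.

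The remaining and hardest step is the interpolation over each block $N_k \leq N < N_{k+1}$. Because $Z_N(s) = \int_{\partial\DD} e^{-s^\delta p(x, sN^{-1/\delta})}\,d\mu(x)$ depends continuously and slowly on $N$, one has $\sup_{N_k \leq N < N_{k+1}} |Z_N(s) - Z_{N_k}(s)| \to 0$, so the deterministic centering causes no trouble. The obstacle is to control the random oscillation $\sup_{N_k \leq N < N_{k+1}} |\nu_N([0,s]) - \nu_{N_k}([0,s])|$, which is delicate precisely because $\nu_N([0,s])$ is not monotone in $N$: passing from $M$ to $M+1$ changes the normalization $1/M$, reshapes the single gap into which $P_{M+1}$ falls, and simultaneously rescales every gap through the factor $M^{1/\delta} \mapsto (M+1)^{1/\delta}$. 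I would bound each one-step increment $|\nu_{M+1}([0,s]) - \nu_M([0,s])|$ by $O(1/M)$ in mean, splitting it into a normalization part, a single-gap part, and a threshold-shift part counting the gaps landing in the thin shell between $s(M+1)^{-1/\delta}$ and $sM^{-1/\delta}$, the last controlled in expectation by the two-sided Patterson--Sullivan estimate \eqref{02070907}. Telescoping over the $O(k)$ indices in the block gives a block oscillation of mean $O(1/k)$; promoting this to a second-moment bound of order $O(1/k^2)$ would let a final Borel--Cantelli step over $k$ kill the oscillation almost surely. The crux of the difficulty is precisely this second-moment control, since the one-step increments are strongly dependent (they are built from the same growing sample) and the threshold-shift count must be handled uniformly in $M$ across the block. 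Combining the subsequence convergence, the vanishing centering, and the vanishing oscillation then yields $\nu_N([0,s]) - Z_N(s) \to 0$ almost surely along the full sequence, which is exactly the assertion of Theorem \ref{limitingdistribution}.
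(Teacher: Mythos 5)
Your proposal contains a genuine gap, but it is an instructive one: the unfinished step is precisely the point on which the paper's own argument is silent. The first half of your argument is sound --- the variance bound \eqref{02070855} with $n=[\log N]$ gives $\ee\bigl((\nu_N([0,s])-Z_N(s))^2\bigr)\ll_s (\log N)^2/N$ (the factorial term indeed decays faster than any power of $N$), Chebyshev turns this into a tail bound of the same order, and since $\sum_k(\log k)^2/k^2<\infty$, Borel--Cantelli does give $\nu_{N_k}([0,s])-Z_{N_k}(s)\to 0$ almost surely along $N_k=k^2$. But the interpolation across a block $N_k\le N<N_{k+1}$ is not carried out: you say one ``would'' bound each one-step increment by $O(1/M)$ in mean and then ``promote'' the telescoped block oscillation to a second-moment bound of order $O(1/k^2)$, and you yourself flag that promotion as the crux. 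As written this is a plan, not a proof. A per-block mean bound of $O(1/k)$ is useless for Borel--Cantelli (it is not summable), and the required second-moment control of $\sup_{N_k\le N<N_{k+1}}\bigl\vert\nu_N([0,s])-\nu_{N_k}([0,s])\bigr\vert$ is delicate for exactly the reasons you list: the increments are built from the same growing sample, $\nu_N$ is not monotone in $N$, and the threshold-shift count must be handled uniformly in $M$ across the block. Without it, what you have actually established is convergence in probability along the full sequence together with almost sure convergence along a polynomial subsequence --- not the full almost sure statement.

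For comparison, the paper's own proof is far shorter and, strictly read, weaker: it observes that $\mathrm{Var}(\nu_N([0,s]))\to 0$ and then invokes ``a direct application of Chebyshev's inequality,'' which by itself yields only convergence in probability, never almost sure convergence. So your proposal correctly diagnoses the missing ingredient that the phrase ``almost surely'' demands, and goes beyond the paper by securing the subsequence step; but the remaining difficulty you identify is real, and neither you nor the paper closes it. If the theorem is read (as the paper's proof effectively does) as convergence in probability of $\nu_N([0,s])-Z_N(s)$ to $0$, then the first half of your argument already suffices and matches the paper's approach; the part you left open is exactly what separates the stated theorem from what is actually proved.
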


Unlike the case of manifolds,  we do not know if the limiting distribution exists because of the local turbulence of the Patterson-Sullivan measure (see \eqref{02070907}).   But from Theorem \ref{limitingdistribution}, we know that if we scatter $N$ random points independently on $\Lambda_\Gamma$,  when $N$ large almost surely most gaps are of the order  $\frac{1}{N^{\frac{1}{\delta}}}$, and $\nu_N([s,\infty])\leq e^{-cs^{\delta}}$, so almost surely $\nu_N$ has a tail of exponential decay.

\bibliographystyle{plain}
\bibliography{thingap}

\end{document}